\documentclass[11pt, reqno]{article}

\usepackage{authblk}

\usepackage[a4paper]{geometry}
\usepackage[utf8]{inputenc}
\usepackage{xcolor}
\usepackage{amsmath,amsfonts,amssymb,amsthm}
\usepackage{graphicx}

\usepackage{hyperref}

\usepackage{cleveref}
  \crefname{section}{Section}{Sections}
  \crefname{figure}{Figure}{Figures}
  \crefname{theorem}{Theorem}{Theorems}
  \crefname{lemma}{Lemma}{Lemmas}
  \crefname{proposition}{Proposition}{Propositions}  
  \crefname{corollary}{Corollary}{Corollaries}
  \crefname{definition}{Definition}{Definitions}
  \crefname{example}{Example}{Examples}
  \crefname{remark}{Remark}{Remarks}

\newtheorem{theorem}{Theorem}[section]
\newtheorem{lemma}[theorem]{Lemma}
\newtheorem{definition}[theorem]{Definition}
\newtheorem{proposition} [theorem] {Proposition}
\newtheorem{corollary}[theorem]{Corollary}
\newtheorem{remark}[theorem]{Remark}
\newtheorem{example}[theorem]{Example}
\newtheorem*{question*}{Question}

\title{A support theorem for SLE curves}
\author[1]{Huy Tran\thanks{tran@math.tu-berlin.de}}
\author[2]{Yizheng Yuan\thanks{yuan@math.tu-berlin.de}}

\affil[1]{TU Berlin, Germany}
\affil[2]{TU and HU Berlin, Germany}

\newcommand*{\defeq}{\mathrel{\mathop:}=}
\newcommand*{\eqdef}{=\mathrel{\mathop:}}

\renewcommand*{\Re}{\operatorname{Re}}
\renewcommand*{\Im}{\operatorname{Im}}
\newcommand*{\osc}{\operatorname{osc}}
\newcommand*{\hcap}{\operatorname{hcap}}
\renewcommand*{\fill}{\operatorname{fill}}
\renewcommand*{\P}{\mathbb{P}}
\renewcommand*{\H}{\mathbb{H}}
\newcommand*{\barH}{\overline{\mathbb{H}}}
\newcommand*{\cD}{\mathcal{D}}
\newcommand*{\SLEk}{SLE$_\kappa$}
%
%
%

\begin{document}

\maketitle
\abstract{For all $\kappa > 0$, we show that the support of SLE$_\kappa$ curves is the closure in the sup-norm of the set of Loewner curves driven by nice (e.g. smooth) functions. It follows that the support is the closure of the set of simple curves starting at $0$.}

%
%
%
\section{Introduction}
\subsection{Overview}
The support of a random variable $X$ in a  Polish space  is the set of points $x$ such that for any open neighborhood $V$ of $x$, we have $\P(X\in V)>0$. In this paper, the random variable $X$ will be a random process, namely the \SLEk{} trace, and our goal is to describe its support.

Characterising the support of random processes such as Brownian motion and diffusions is an important research problem for stochastic (partial) differential equations, where it was initiated by Stroock and Varadhan \cite{SV} when they studied a strong maximum principle of a PDE operator. In \cite{CF11} a support theorem was the key to a H\"ormander/Malliavin theory for rough differential equations. The description of a support is also an important step to study the invariant measure of stochastic equations (see e.g. \cite{TW, CF}). Other questions related to support theorems are large deviation estimates, or the continuity of solution maps of SDE and SPDE.



\SLEk{} is an important random planar curve that shares many analogies with Brownian motion and other random processes. \SLEk{} is proven and conjectured to be the scaling limits or interface of many discrete models arisen from statistical physics (e.g. \cite{LSW04, Smi01, SS05, SS09, Che+}). Instead of the Markov property, it satisfies a domain Markov property. Depending on the parameter $\kappa$, it has different regularities (similar to fractional Brownian motion). Moreover, \SLEk{} is defined through a family of deterministic ordinary differential equations called Loewner equations with the random input $\sqrt{\kappa} B$, where $B$ is the one dimensional Brownian motion.

Motivated by the rich study of Brownian motion and other processes and by the similarities of \SLEk{} to them, it is natural to ask for a support theorem for \SLEk{} curves. Before stating the main result, let us give an overview of \SLEk{}.

The Loewner map (defined in \cref{s:def}) associates to certain real-valued continuous functions $\lambda\in C([0,1],\mathbb{R})$ a continuous non-crossing Loewner curve $\gamma^\lambda\in C([0,1],\overline{\mathbb{H}})$. The curve is constructed from a family of Loewner equations. Not every function in $C([0,1],\mathbb{R}) $ corresponds to a curve; see an example in \cite[Section 5]{MR}. It is proved that the Loewner curve $\gamma^\lambda$ is defined if locally the $1/2$-H\"older constant of $\lambda$ is less than 4 (\cite{MR}, \cite{Lind05}). 

We call the Loewner map with the input $\sqrt{\kappa}B$ the Schramm-Loewner map. It is shown that this map is almost surely well-defined (\cite[for $\kappa\neq 8$]{RS}, \cite[for $\kappa=8$]{LSW04}), that is, a.s. it gives rise to a curve. These random curves are called \SLEk{} curves, and (abusing notation) denoted $\gamma^\kappa$ (instead of $\gamma^{\sqrt\kappa B})$.

The previous properties could remind us of stochastic differential equations (SDE). An SDE is also driven by a Brownian motion, and if one replaces the Brownian motion by smooth functions, then the SDE becomes an ODE and has a deterministic solution. Recall that the support of the solution to an SDE can be characterized by the solutions of the ODEs that arise by replacing the Brownian noise by Cameron-Martin paths (see e.g. \cite[Chapter 19]{FV}). One could guess that the support of SLE can be described in the analogous way. We show in this paper that this is indeed true.

The main difficulty in proving such statements is that the Schramm-Loewner map (or, in the SDE case, solution map) is not continuous, and even only almost surely defined. If it were, the support theorem for SLE would immediately follow from the well-known support theorem for Brownian motion. Note also that the \SLEk{} curve is not a diffusion process, even though the Loewner equation with Brownian motion as an input can be seen as an SDE. Hence the method of proving support theorems for diffusion processes does not apply directly to \SLEk{}.

Consider the set $\cD$ of all functions that have locally vanishing $1/2$-H\"older constant. See Section \ref{s:def} for the exact definition and properties of $\mathcal{D}$. Our main theorem is the following.
\begin{theorem}\label{t:main}
Fix $\kappa>0$.  The support of \SLEk{}, parametrized by half-plane capacity,  in the space $C([0,1],\overline{\mathbb{H}})$ is the closure of $\{\gamma^\lambda:\lambda\in \cD\}$  with respect to the sup-norm topology.
\end{theorem}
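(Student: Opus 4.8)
The plan is to prove the two inclusions separately. The easy direction is that the support is contained in the closure of $\{\gamma^\lambda : \lambda \in \cD\}$. For this, recall that $\sqrt{\kappa}B$ has locally finite $p$-variation for every $p>2$ and, more to the point, locally vanishing $1/2$-Hölder constant is *not* satisfied by Brownian motion — so one cannot argue pointwise. Instead, the standard move is: approximate $\sqrt\kappa B$ by smooth paths (e.g. piecewise linear interpolation or mollification) $\lambda_n \in \cD$ and show that for SLE to lie in the support, it suffices that with positive probability $\gamma^{\sqrt\kappa B}$ is close to $\gamma^{\lambda_n}$. Equivalently, one shows that any open ball around an SLE sample meets $\{\gamma^\lambda : \lambda \in \cD\}$ with positive probability; this follows once we know that the Loewner-to-curve map, restricted to the (full-measure) set where the SLE curve exists, can be suitably approximated using driving functions in $\cD$. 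The cleanest formulation is via a stability statement: if $\lambda_n \to \lambda$ in sup-norm *and* the curves $\gamma^{\lambda_n}$ converge, then the limit is $\gamma^\lambda$; combined with the fact that $\cD$ is sup-norm dense in $C([0,1],\mathbb{R})$ and a probabilistic argument that the SLE curve can be realized as such a limit along a positive-probability event, this gives one inclusion.

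The hard direction is that every $\gamma^\lambda$ with $\lambda \in \cD$ lies in the support of SLE$_\kappa$, i.e. for every $\varepsilon>0$,
\begin{equation*}
\P\bigl(\|\gamma^{\sqrt\kappa B} - \gamma^\lambda\|_\infty < \varepsilon\bigr) > 0.
\end{equation*}
The natural strategy is: (i) a *deterministic* stability estimate for the Loewner map — if $\|\mu - \lambda\|_\infty$ is small and $\lambda \in \cD$ (so $\gamma^\lambda$ is well-behaved, the hulls are nice), then $\|\gamma^\mu - \gamma^\lambda\|_\infty$ is small, with quantitative control; this is where the vanishing $1/2$-Hölder hypothesis on $\lambda$ does the work, since it guarantees the conformal maps $g_t^\lambda$ and their inverses are equicontinuous up to the boundary. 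Then (ii) transfer to the random setting: one needs $\P(\|\sqrt\kappa B - \lambda\|_\infty < \delta)>0$ for all $\delta>0$, which is the classical support theorem for Brownian motion (Cameron–Martin paths are dense, and here $\lambda$ need not even be Cameron–Martin — one first approximates $\lambda$ by a smooth, hence Cameron–Martin, path in $\cD$, using that $\cD$-functions can be approximated within $\cD$). Combining (i) and (ii) gives the claim.

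The main obstacle is step (i): the deterministic continuity of $\lambda \mapsto \gamma^\lambda$ in the sup-norm on curves. The Loewner map is genuinely discontinuous on all of $C([0,1],\mathbb{R})$, and even on the subset where curves exist the modulus of continuity degenerates; the point of restricting the *target* neighborhood's center to $\lambda\in\cD$ is that near such $\lambda$ one has uniform control. Concretely, I would fix $\lambda\in\cD$, use that the Loewner hull $K_t^\lambda$ has boundary arcs that are "transversal" and that $g_t^\lambda$ extends continuously to $\overline{\mathbb{H}}$ with a modulus independent of $t$, then run a Gronwall-type argument on $|g_t^\mu(z) - g_t^\lambda(z)|$ and convert the resulting closeness of conformal maps into closeness of the generated curves by inverting. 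One must handle the tip of the curve carefully, since that is where conformal distortion blows up; this likely requires iterating the estimate on a carefully chosen sequence of scales, exploiting that $\lambda$ has *vanishing* (not just small) Hölder constant so that on small time intervals the hull is close to a vertical slit, for which everything is explicit. A secondary obstacle is making sure that the approximating smooth paths can be chosen inside $\cD$ and close to $\lambda$ in sup-norm simultaneously — but this is a soft mollification argument once the properties of $\cD$ from \cref{s:def} are in hand.
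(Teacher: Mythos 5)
Your proposal for the hard direction rests on step (i), a deterministic stability estimate of the form ``$\|\mu-\lambda\|_\infty$ small and $\lambda\in\cD$ implies $\|\gamma^\mu-\gamma^\lambda\|_\infty$ small.'' This statement is false, and its failure is precisely the central difficulty of the theorem. The ``Christmas tree'' example (from Lawler's book, reproduced in the paper's introduction) exhibits drivers $U^{(n)}$ with $\|U^{(n)}\|_{\infty,[0,1]}\le c/\sqrt n\to 0$, hence uniformly close to $\lambda\equiv 0\in\cD$, whose traces have no convergent subsequence as parametrized curves and in particular do not approach $\gamma^0(t)=2i\sqrt t$ in sup-norm. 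So centering the ball at a $\cD$-driver does not restore continuity of the Loewner map, and no Gronwall/equicontinuity argument on $|g^\mu_t-g^\lambda_t|$ can yield the modulus you need; consequently the reduction to $\P(\|\sqrt\kappa B-\lambda\|_\infty<\delta)>0$ via the classical Brownian support theorem cannot close. The correct argument is genuinely probabilistic: one partitions $[0,1]$ and bounds $|\gamma^\xi(t)-\gamma^\lambda(t)|$ for $t\in[t_{k+1},t_{k+2}]$ by a term controlled through the uniform continuity of $f^\lambda_{t}$ plus a term of the form $\|\xi-\lambda\|_{[0,t_k]}\,c_{t_k,t_{k+2}}^{-1}$ with $c_{t_k,t_{k+2}}=\inf_{[t_{k+1},t_{k+2}]}\Im\gamma^\xi_{t_k}$, and then requires $\xi$ to track $\lambda$ on $[t_{k-1},t_k]$ within a \emph{random} threshold $\varepsilon_k$ depending on the future increments of $\xi$. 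The independence of Brownian increments (a backward induction over the partition) shows this adaptive event has positive probability. For $\kappa>4$ one must additionally rule out $c_{t_k,t_{k+2}}=0$, which the paper does by comparing swallowing times of the reversed flows of $\xi$ and $\lambda$ and invoking Zhan's theorem that $\partial K_t$ meets $\mathbb{R}$ only at its endpoints. None of these ingredients is present in, or recoverable from, your outline.

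The easy direction is also underpowered as written. What is needed is: almost surely, for every $\varepsilon>0$ there exists $\lambda\in\cD$ with $\|\gamma^{\sqrt\kappa B}-\gamma^\lambda\|_\infty<\varepsilon$. The paper obtains this from the nontrivial Wong--Zakai-type theorem of Tran: the traces driven by piecewise-linear interpolations of the Brownian driver converge almost surely in sup-norm to the SLE trace (for $\kappa\ne 8$; the case $\kappa=8$ goes through the UST Peano curve of Lawler--Schramm--Werner). Your proposed route --- a stability statement asserting that if $\lambda_n\to\lambda$ uniformly \emph{and} the curves $\gamma^{\lambda_n}$ converge then the limit is $\gamma^\lambda$ --- begs the question of why the curves converge at all, which is the entire content; and sup-norm density of $\cD$ in $C([0,1],\mathbb{R})$ is of no help on its own, again because the Loewner map is discontinuous.
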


Obviously $\cD$ contains $W^{1,2}$, the space of Cameron-Martin paths, so we can also describe the support of \SLEk{} as
\begin{equation*}
    \mathcal{S} = \overline{\{\gamma^\lambda \mid \lambda \in W^{1,2}, \lambda(0)=0\}},
\end{equation*}
in analogy to the corresponding result for SDE.

Moreover, the same set can be represented as
\begin{equation*}
    \mathcal{S} = \overline{\{\gamma \in C([0,1];\mathbb{H}) \mid \gamma \text{ simple, param. by half-plane capacity, and } \gamma(0)=0\}}.
\end{equation*}
See \cref{s:other_variants} for more details.

Note that in the statement of \cref{t:main} it is important to specify the topological space where the random process belongs to. There might be several ``natural'' spaces to which the random process corresponds. \SLEk{} can be viewed as a subset of the plane, a continuous path, an $\alpha$-H\"older function (\cite{Lind08}, \cite{JVL}), a $p$-variation path, or an element of Besov spaces (\cite{FT}). When we consider \SLEk{} only as compact subsets and measure distances by the Hausdorff metric, one can show a corresponding version of Theorem \ref{t:main} by applying the method in \cite[Lemma 8.2]{BJVK}. For \cref{t:main} which is a characterization of the support of \SLEk{} in the sup-norm, one needs a non-trivial effort. We believe that a similar statement can be made for Hölder and $p$-variation spaces (see discussion in \cref{s:other_variants}). (Similarly, there are different versions of the support theorem for Brownian motion and SDE; see for example \cite{LQZ}, \cite{AGL}.)


A direct consequence of \cref{t:main} is that all the above statements are true in the strong topology of curves, which is weaker than the sup-norm topology. Consider the space of continuous paths $\alpha:[0,1]\to\overline{\mathbb{H}}$ modulo reparametrisation. Then the strong topology is defined by the metric
\begin{equation*}
    \rho(\alpha,\beta) = \inf_\psi \sup_{t \in [0,1]} |\alpha(t)-\beta\circ\psi(t)|
\end{equation*}
where the infimum is taken over all increasing homeomorphisms from $[0,1]$ to $[0,1]$.

\begin{corollary}
    Fix $\kappa>0$.  The support of \SLEk{}, in the strong topology, is the closure of
    \begin{equation*}
        \{\gamma^\lambda:\lambda\in \cD\},
    \end{equation*}
    which is equal to the closure of
    \begin{equation*}
        \{\gamma \in C([0,1];\mathbb{H}) \mid \gamma \text{ simple}, \gamma(0)=0, \hcap(\gamma[0,1])=2\}.
    \end{equation*}
\end{corollary}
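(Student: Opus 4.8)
\emph{Proof plan.} My plan is to obtain the corollary from \cref{t:main} by pushing the sup-norm support forward under the projection $\Pi$ from $C([0,1],\barH)$ onto the space of curves modulo increasing reparametrisation. Two facts are needed. First, taking the identity reparametrisation in the definition of $\rho$ gives $\rho(\Pi\alpha,\Pi\beta)\le\|\alpha-\beta\|_\infty$, so $\Pi$ is $1$-Lipschitz, hence continuous, Borel measurable, and with separable image (a continuous image of the separable space $C([0,1],\barH)$). Second, the elementary measure-theoretic fact: if $\mu$ is a Borel probability measure on a Polish space $E$ with support $S$ and $f\colon E\to F$ is continuous into a separable metric space $F$, then $\operatorname{supp}(f_*\mu)=\overline{f(S)}$. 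Indeed $\overline{f(S)}\subseteq\operatorname{supp}(f_*\mu)$ because $f^{-1}(V)$ is an open neighbourhood of a point of $S$ whenever $V$ is an open neighbourhood of the corresponding point of $f(S)$; conversely $f_*\mu\bigl(F\setminus\overline{f(S)}\bigr)\le\mu(E\setminus S)=0$, the last equality because $E$ is Lindel\"of, so the complement of $S$, being a union of $\mu$-null open sets, is itself $\mu$-null.

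Apply this with $\mu$ the law of \SLEk{} parametrised by half-plane capacity (so that $\Pi_*\mu$ is the law of \SLEk{} in the strong topology), $E=C([0,1],\barH)$, $f=\Pi$, and $S=\overline{\{\gamma^\lambda:\lambda\in\cD\}}$ the sup-norm closure, which by \cref{t:main} is the support of $\mu$. This yields that the support in the strong topology is $\overline{\Pi(S)}$, the closure taken in $\rho$. Since $\Pi$ is continuous, $\Pi(\overline A)\subseteq\overline{\Pi(A)}$ and therefore $\overline{\Pi(\overline A)}=\overline{\Pi(A)}$ for every set $A$; with $A=\{\gamma^\lambda:\lambda\in\cD\}$ this shows the support equals the $\rho$-closure of $\{\gamma^\lambda:\lambda\in\cD\}$, which is the first asserted identity.

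For the second identity I would run the same argument starting from the reformulation of \cref{t:main} established in \cref{s:other_variants}: that the sup-norm support equals the sup-norm closure of the set of simple curves started at $0$ and parametrised by half-plane capacity. Pushing forward by $\Pi$ exactly as above shows the strong-topology support to be the $\rho$-closure of $\{\Pi\gamma:\gamma\text{ simple, param.\ by half-plane capacity},\ \gamma(0)=0\}$. It then remains to see that this set of equivalence classes coincides with $\{\Pi\gamma:\gamma\text{ simple},\ \gamma(0)=0,\ \hcap(\gamma[0,1])=2\}$. The inclusion ``$\subseteq$'' is immediate, since a curve parametrised by half-plane capacity on $[0,1]$ has $\hcap(\gamma[0,1])=2$. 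For ``$\supseteq$'', note that for a simple curve the function $t\mapsto\hcap(\gamma[0,t])$ is a continuous, strictly increasing bijection of $[0,1]$ onto $[0,2]$ (this is precisely what underlies the capacity parametrisation; see \cref{s:def}), so composing $\gamma$ with the inverse of $t\mapsto\tfrac12\hcap(\gamma[0,t])$ gives a representative of $\Pi\gamma$ that is simple, starts at $0$, and is parametrised by half-plane capacity.

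The genuine content of the corollary is entirely in \cref{t:main} and its simple-curve reformulation; everything above is soft. The only two points that deserve a line of care are the measure-theoretic lemma, where Polishness of $C([0,1],\barH)$ is what makes the sup-norm support carry full $\mu$-mass, and the reparametrisation step, where continuity and strict monotonicity of half-plane capacity along a simple curve is what makes its capacity parametrisation well defined. I do not expect a real obstacle.
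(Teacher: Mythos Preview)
Your argument is correct and follows the same line the paper takes: the paper does not give a formal proof of this corollary, remarking only that it is ``a direct consequence of \cref{t:main}'' because the strong topology is weaker than the sup-norm topology. Your push-forward/Lipschitz-projection argument is precisely the rigorous unpacking of that sentence, and your handling of the second identity via reparametrisation by half-plane capacity matches the content of \cref{p:simple_curves} in \cref{s:other_variants}.
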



Theorem \ref{t:main} consists of the two following results.
\begin{proposition}\label{p:WZ}
	Let $\kappa \ge 0$. For each $\varepsilon>0$, almost surely there exists $\lambda\in \cD$ such that $
	\|\gamma^{\kappa}- \gamma^\lambda\|_{\infty,[0,1]} < \varepsilon$.
\end{proposition}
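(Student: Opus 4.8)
The plan is to approximate the driving function $\sqrt\kappa B$ directly by a nice (e.g. piecewise linear or smooth) function $\lambda$ and then argue that the corresponding Loewner curves are close in sup-norm. The first and main step is to establish a \emph{stability estimate} for the Loewner map: if $\lambda, \tilde\lambda \in C([0,1],\mathbb{R})$ are two driving functions whose curves $\gamma^\lambda, \gamma^{\tilde\lambda}$ both exist and are sufficiently regular, then $\|\gamma^\lambda - \gamma^{\tilde\lambda}\|_{\infty,[0,1]}$ is controlled by $\|\lambda - \tilde\lambda\|_{\infty,[0,1]}$ together with some modulus-of-continuity data of the two curves (or of the conformal maps $g_t$, e.g.\ a lower bound on how far the curve stays from $\mathbb{R}$, or a bound on the derivatives $g_t'$). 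This is a purely deterministic input; one derives it by comparing the two Loewner ODEs $\partial_t g_t = 2/(g_t - \lambda_t)$ and differentiating the difference, using Gronwall, and then transferring the estimate on $g_t$ (or its inverse) to an estimate on the tip $\gamma(t) = \lim_{z\to\lambda_t} g_t^{-1}(z)$.

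Second, I would feed the Brownian driver into this estimate. Fix $\varepsilon>0$. On a high-probability event, $\sqrt\kappa B$ has good regularity properties: it is locally $\alpha$-Hölder for $\alpha<1/2$, and the \SLEk{} curve $\gamma^\kappa$ it generates has a.s.\ the qualitative regularity (continuity, the relevant conformal estimates) that the stability statement requires. Then choose $\lambda$ to be, say, the piecewise-linear interpolation of $\sqrt\kappa B$ on a fine mesh, so that $\|\lambda - \sqrt\kappa B\|_{\infty} $ is as small as we like; since such $\lambda$ is Lipschitz it lies in $\cD$ (vanishing $1/2$-Hölder constant), and its curve $\gamma^\lambda$ exists by the Marshall–Rohde/Lind criterion. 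Applying the stability estimate on the good event gives $\|\gamma^\kappa - \gamma^\lambda\|_{\infty,[0,1]} < \varepsilon$. A short additional argument upgrades ``on a high-probability event'' to ``almost surely'': the event that \emph{some} $\lambda\in\cD$ works with tolerance $\varepsilon$ is increasing in the good event, and the good events can be taken to exhaust a probability-one set, so a.s.\ we land in one of them.

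The hard part will be the deterministic stability estimate, and specifically making it \emph{robust}: the naive Gronwall comparison of the two flows blows up as the curve approaches the real line (the vector field $2/(g_t-\lambda_t)$ is singular there), and the tip of the curve is exactly where $g_t^{-1}$ is least regular. So the real work is to quantify the stability in terms of quantities that are a.s.\ finite and well-behaved for \SLEk{} --- for instance controlling $\|\gamma^\kappa - \gamma^\lambda\|_\infty$ via the behaviour of the maps near the tip over dyadic time intervals, or via a bound like the one underlying the Hölder continuity of \SLEk{}, rather than via a single global Gronwall constant. One delicate point is that for $\kappa \geq 4$ the \SLEk{} curve is not simple and touches $\mathbb{R}$, so the estimate cannot rely on the curve staying in the open half-plane; it must instead be phrased in terms of the conformal maps $g_t$ and their behaviour as $z\to\lambda_t$, uniformly enough to survive the approximation. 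Getting this estimate in a form that holds on an event of probability arbitrarily close to one --- and then the routine $0$--$1$ upgrade --- is the crux; everything else (choice of $\lambda$, membership in $\cD$, existence of $\gamma^\lambda$) is standard.
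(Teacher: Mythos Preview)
Your plan matches the paper's approach for $\kappa \neq 8$: the paper simply cites \cite{Tran}, where exactly the stability-plus-piecewise-linear-interpolation argument you describe is carried out (the inequality \eqref{e:diff1} together with moment bounds on $|(f^\xi_t)'|$ plays the role of your Gronwall sketch). So for those $\kappa$ you are on the right track, modulo the technical execution you flag as ``the hard part.''

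There is, however, a genuine gap at $\kappa = 8$. The stability estimate you envision ultimately rests on controlling $|(f^\xi_t)'(iy)|$ as $y \to 0$ (this is what transfers closeness of drivers to closeness of tips), and the known bounds --- the Rohde--Schramm derivative moments and their refinements in \cite{JVL} --- degenerate at $\kappa = 8$: the trace is space-filling, the optimal H\"older exponent collapses to $0$, and indeed the very existence of the SLE$_8$ curve is \emph{not} proved via Loewner-flow regularity but via convergence of the UST Peano curve \cite{LSW04}. Your remark about $\kappa \ge 4$ touching $\mathbb{R}$ is correct but does not isolate this deeper obstruction; the result of \cite{Tran} explicitly excludes $\kappa = 8$ for this reason.

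The paper accordingly handles $\kappa = 8$ by a completely different route: it invokes \cite[Theorem 4.8]{LSW04} to find, with positive probability, a (discrete, simple, piecewise smooth) UST Peano segment within $\varepsilon$ of $\gamma^8$ in sup-norm, and then smooths that path and reparametrises by half-plane capacity to obtain a curve driven by a smooth function, hence by an element of $\mathcal{D}$. No Loewner stability estimate is used for $\kappa = 8$. You would need either to supply a stability argument that survives $\kappa = 8$ (not currently available) or to add this separate case.
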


\begin{proposition}\label{p:main}
	Let $\kappa > 0$. For each $\varepsilon>0$ and $\lambda\in \cD$, we have $\P(\|\gamma^\kappa-\gamma^\lambda\|_{\infty,[0,1]}<\varepsilon)>0$.
\end{proposition}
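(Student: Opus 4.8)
The plan is to prove the statement in two movements: first reduce the claim about the closeness of curves $\gamma^\kappa$ and $\gamma^\lambda$ to a claim about the closeness of the driving functions $\sqrt\kappa B$ and $\lambda$ (in a suitable norm), and then invoke the support theorem for Brownian motion to conclude that the latter event has positive probability. The reduction is the delicate part, because the Loewner map $\lambda \mapsto \gamma^\lambda$ is not continuous with respect to the sup-norm on driving functions. However, it is known (and this is presumably established in \cref{s:def}, building on \cite{MR}, \cite{Lind05}) that the Loewner map \emph{is} continuous at any $\lambda \in \cD$ with respect to a stronger norm — namely, if $\lambda_n \to \lambda$ in the $1/2$-Hölder norm (or some comparable norm controlling the local Hölder constant), then $\gamma^{\lambda_n} \to \gamma^\lambda$ in sup-norm. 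So the strategy is: fix $\lambda \in \cD$ and $\varepsilon>0$; by this continuity there is $\delta>0$ such that $\|\mu - \lambda\|_{1/2\text{-Höl}} < \delta$ implies $\|\gamma^\mu - \gamma^\lambda\|_{\infty,[0,1]} < \varepsilon$; it then suffices to show $\P(\|\sqrt\kappa B - \lambda\|_{1/2\text{-Höl}} < \delta) > 0$.

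The main obstacle is that Brownian motion is \emph{not} $1/2$-Hölder continuous, so the event $\{\|\sqrt\kappa B - \lambda\|_{1/2\text{-Höl}} < \delta\}$ is a null event — indeed empty — and the naive reduction fails. This is exactly the non-triviality the introduction warns about. I would circumvent it by not asking $B$ itself to be close to $\lambda$ in Hölder norm, but instead by working on short time intervals and using the scaling and domain-Markov properties of the Loewner flow. Concretely, partition $[0,1]$ into $N$ subintervals; on each subinterval the Brownian increment is small in sup-norm with positive probability (by the support theorem for Brownian motion, one can force $\sqrt\kappa B$ to stay within $\delta'$ of the corresponding piece of $\lambda$ in sup-norm with positive probability, and these events on disjoint intervals are independent). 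One then needs a quantitative, \emph{sup-norm} stability estimate for the Loewner curve that is valid on short capacity-time intervals: if two driving functions are $\delta'$-close in sup-norm over a time interval of capacity-length $s$, and if we have a priori control on the geometry (e.g. the curve and its hull stay in a bounded region, bounded away from degeneracy), then the corresponding curve segments are $C(s,\delta')$-close with $C(s,\delta') \to 0$ as $s,\delta' \to 0$. Such estimates follow from Gronwall-type arguments applied to the Loewner ODE together with distortion estimates for the conformal maps $g_t$, using that $\lambda \in \cD$ gives the required a priori regularity of the reference curve $\gamma^\lambda$ and its uniformizing maps.

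Assembling these ingredients: choose $N$ large and $\delta'$ small so that the per-interval sup-norm closeness, propagated through the Loewner flow with the short-interval stability estimate and iterated over the $N$ intervals (tracking how errors in the conformal maps compose — here one uses that $\gamma^\lambda \in \cD$ keeps the relevant Loewner hulls non-degenerate, so the composition does not blow up), yields $\|\gamma^{\sqrt\kappa B} - \gamma^\lambda\|_{\infty,[0,1]} < \varepsilon$ on an event of positive probability. The positivity of that event is then simply the product (by independence of Brownian increments on disjoint intervals, or an iterated conditioning argument via the Markov property of $B$) of the positive probabilities supplied by the Brownian support theorem on each subinterval. I expect the genuinely hard step to be the uniform short-interval stability estimate for the Loewner curve in sup-norm together with controlling the composition of conformal maps over the $N$ steps — this is where the assumption $\lambda \in \cD$ (vanishing local Hölder constant), as opposed to merely $\lambda$ continuous, is essential, since it prevents the reference hulls from pinching and keeps all constants uniform.
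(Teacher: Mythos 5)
Your high-level skeleton (partition $[0,1]$, force $\sqrt\kappa B$ close to $\lambda$ in sup-norm on each subinterval, use independence of Brownian increments to get positive probability) matches the paper's. But the step you defer to as ``the genuinely hard step'' is stated in a form that is actually false, and the real resolution requires an idea your proposal does not contain. There is no deterministic estimate of the form ``drivers $\delta'$-close in sup-norm on a short capacity interval, plus regularity of the \emph{reference} curve $\gamma^\lambda$, implies the curve segments are $C(s,\delta')$-close'': the Christmas-tree example shows that a driver uniformly within $c/\sqrt n$ of $0$ can produce a curve that stays far from $\gamma^0$ in sup-norm, no matter how small $c/\sqrt n$ is. The obstruction is quantitative: the error contributed by the discrepancy of the drivers on $[0,t_0]$ enters as $\|\xi-\lambda\|_{\infty,[0,t_0]}\,/\,\inf_{t\in[t_1,t_2]}\Im\gamma^\xi_{t_0}(t)$ (via \cref{t:f1-f2} applied to the reverse flows), and the denominator is a property of the \emph{random} curve $\gamma^\xi$, not of $\gamma^\lambda$; it can be arbitrarily small with probability that is not small. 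Your ``a priori control on the geometry, bounded away from degeneracy'' is precisely what is not available a priori.

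The paper's way out is probabilistic and directional: the quantity $c_{t_k,t_{k+2}}=\inf_{t\in[t_{k+1},t_{k+2}]}\Im\gamma^\xi_{t_k}(t)$ depends only on the increments of $\xi$ on $[t_k,t_{k+2}]$, so one chooses the tolerances $\varepsilon_k$ \emph{backward} in $k$ as random variables measurable with respect to the future increments (namely $\varepsilon_k\le a\,c_{t_k,t_{k+2}}$), and then uses independence of the increment on $[t_{k-1},t_k]$ from $\mathcal F_{t_k,1}$ to conclude, by backward induction, that the intersection of all the constraints still has positive probability. This conditional structure --- tolerances on early intervals depending on the realization of the curve on later intervals --- is the key idea missing from your proposal; a forward Gronwall propagation with deterministic constants cannot replace it. Separately, for $\kappa>4$ you would also need to rule out $c_{t_k,t_{k+2}}=0$, which happens with positive probability unconditionally; the paper handles this by analyzing the time-reversed Loewner flow (Corollaries \ref{t:interval_eaten} and \ref{t:gamma_in_H}) together with Zhan's theorem that $\partial K_t$ meets $\mathbb{R}$ only at its endpoints. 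Your proposal does not address this case at all.
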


The first proposition implies that the set $\overline{\{\gamma^\lambda:\lambda\in \cD\}}$ contains the support of \SLEk{}, while the second implies the other inclusion. Proposition \ref{p:WZ} is similar to the Wong-Zakai Theorem  \cite{WZ1, WZ2}, which is usually considered as an easier direction of support theorem. In principle, the Wong-Zakai Theorem says that if one regularizes or approximates the input (which is Brownian motion), then the output is also approximated. For \SLEk{}, $\kappa \neq 8$, this has been shown in \cite{Tran}. The result for $\kappa=8$ follows from \cite{LSW04}.

\begin{proof}[Proof of \cref{p:WZ}]
Let $\kappa \neq 8$. Fix a sample of $\xi = \sqrt{\kappa} B(\omega)$. Let $0=t_0 <t_1<\cdots < t_n=1$ with $t_k = \frac{k}{n}$ being a partition of $[0,1]$. Define $\lambda\in C([0,1], \mathbb{R})$ such that
\begin{itemize}
\item $\lambda(t_k) = \xi(t_k)$ for all $k$.
\item $\lambda$ is linear on $[t_k,t_{k+1}]$, i.e. \begin{equation*} \lambda(t) = \lambda(t_k) + n(\lambda(t_{k+1}))-\lambda(t_k))(t-t_k).\end{equation*}
\end{itemize}

The result in \cite[Section 4.1]{Tran} states that a.s. $\lim_{n\to \infty} \|\gamma^\lambda-\gamma^\xi\|_{\infty,[0,1]} = 0$. This shows \cref{p:WZ} in the case $\kappa\neq 8$ since the local $1/2$-H\"older constant of $\lambda$ is as small as we want, i.e. $\lambda\in \mathcal{D}$.

For $\kappa=8$, let $\gamma^8$ be a sample of the SLE$_8$ trace on the time interval $[0,1]$. Then it is almost surely in the support of SLE$_8$, i.e. for any $\varepsilon$-neighborhood $B_\varepsilon$ of $\gamma^8$, SLE$_8$ is in $B_\varepsilon$ with positive probability. From \cite[Theorem 4.8]{LSW04} it follows that with positive probability, a segment of some UST Peano curve $\hat\gamma$, mapped into $\mathbb{H}$, is also in $B_\varepsilon$. In particular, there is some sample of $\hat\gamma$ such that $\|\gamma^8-\hat\gamma\|_{\infty,[0,1]} < \varepsilon$.

Moreover, the UST Peano curve is constructed in \cite{LSW04} as a simple, piecewise smooth curve. Therefore, by rounding off the edges and reparametrising by half-plane capacity (a precise argument is conducted in the proof of \cref{p:simple_curves}), we find a smooth Loewner curve $\gamma$ (which in particular has a smooth driving function) with $\|\gamma^8-\gamma\|_{\infty,[0,1]} < \varepsilon$.
\end{proof}

Proving \cref{p:main} is the main part of this paper.

\subsection{Strategy}

First let us note a main difficulty in proving \cref{p:main}. Recall that in general, the Loewner map is not continuous, as the following example in \cite[Page 116]{Law} shows.

\begin{example}
 Let $\gamma^{(n)}$ be the simple polygonal path connecting the points $0$, $z_1$, $w_1$, $\hat z_1$, $\hat w_1$, $z_2$, $w_2$, $\hat z_2$, $\hat w_2$, ..., and parametrized by half-plane capacity, where
	\begin{align*}
	z_k = -\frac{1}{n}+i\frac{k}{n}, \quad w_k = i\frac{k}{2n}, \quad \hat z_k = \frac{1}{n}+i\frac{k}{n}, \quad \hat w_k = i\frac{k+1/2}{2n}.
	\end{align*}
	One can show that the driving function satisfies $|U^{(n)}_t| \le \frac{c}{\sqrt n}$ for $t \in [0,1]$ and some constant $c$. But the sequence $(\gamma^{(n)})_{n \in \mathbb{N}}$ has no convergent subsequence.
	
	Note that as sets, the traces $\gamma^{(n)}$ indeed come closer to the trace of the zero function, i.e. $\gamma(t) = i2\sqrt t$, but not as parametrized paths.
	
	\begin{figure}[ht]
		\centering
		\includegraphics[width=0.48\textwidth]{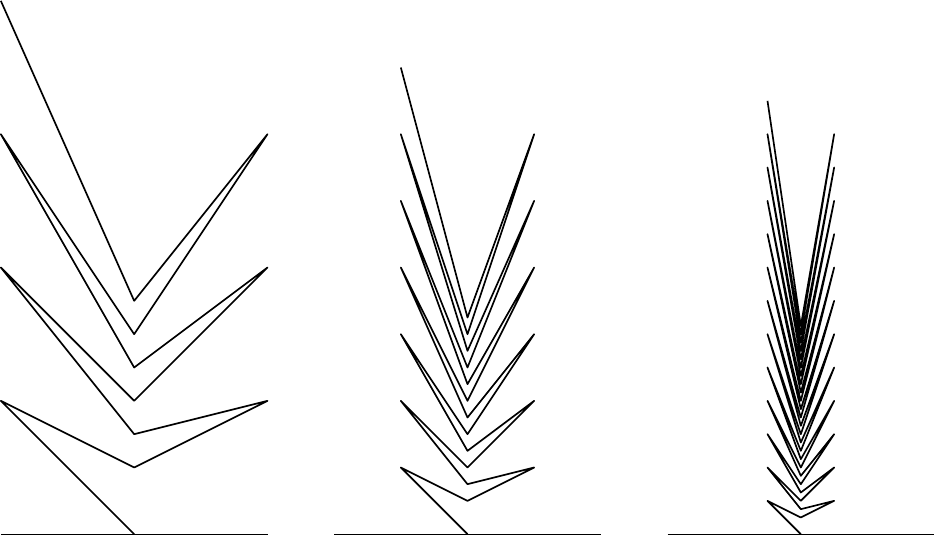}
		\caption{The ``Christmas tree''.}
		\label{fig:christmas_tree}
	\end{figure}

\end{example}

The above example shows that  with a small change  to a nice function $\lambda$, the curve $\gamma^\lambda$ could wiggle drastically.  Hence the event $\{\|\sqrt{\kappa} B - \lambda\|_{\infty,[0,1]}<\delta\}$ does not directly imply that $\|\gamma^\kappa - \gamma^\lambda\|<\epsilon$. 

The proof of \cref{p:main} will be of the form:
\begin{align}\label{e:form}
\text{If } (\xi,\gamma^\xi) \text{ satisfies } (A) \text{, then } \|\gamma^\xi-\gamma^\lambda\|_{\infty,[0,1]} \leq\varepsilon.
\end{align}
Naturally, one expects $(A)$ to contain the condition that $\|\xi-\lambda\|_\infty$ is small. But we need also something else that prevents a ``Christmas tree'' behaviour. The exact form of $(A)$ will be formulated in \cref{t:repeated_intervals}. The condition is roughly as follows:
\begin{align*}
\text{For all } k, \quad \xi \text{ is close to } \lambda \mbox{ on }  [t_k,t_{k+1}],
\end{align*}
where $0=t_0<t_1<\cdots< t_n=1$ is a partition of $[0,1]$ depending on $\lambda$ and $\varepsilon$, and the closeness of $\xi$ to $\lambda$ on $[t_k,t_{k+1}]$ depends on how $\gamma^\xi_{t_k}$ behaves on $[t_{k+1}, 1]$. ($\gamma^\xi_{t_k}$ denotes the trace of the Loewner chain driven by $\xi$ restricted to $[t_k, 1]$.) This structure of $(A)$ allows us to make use of the independent increments of Brownian motion, which will imply that $(A)$ is satisfied with positive probability.

Now we explain roughly how we estimate the difference $\|\gamma^\xi-\gamma^\lambda\|_{\infty,[0,1]}$ in \eqref{e:form}, and derive condition $(A)$. 

Let $0 \le t_0 < t_1 < t_2$ and $t \in [t_1,t_2]$. As in 
\cite{JVRW, Tran}, one uses either
\begin{multline}\label{e:diff}
|\gamma^\xi(t)-\gamma^\lambda(t)| \le |f^\xi_{t_0}(\gamma^\xi_{t_0}(t)+\xi(t_0))-f^\lambda_{t_0}(\gamma^\xi_{t_0}(t)+\xi(t_0))| \\
+ |f^\lambda_{t_0}(\gamma^\xi_{t_0}(t)+\xi(t_0))-f^\lambda_{t_0}(\gamma^\lambda_{t_0}(t)+\lambda(t_0))|
\end{multline}
or
\begin{multline}\label{e:diff1}
|\gamma^\xi(t)-\gamma^\lambda(t)| \le |f^\xi_{t_0}(\gamma^\xi_{t_0}(t)+\xi(t_0))-f^\xi_{t_0}(\gamma^\lambda_{t_0}(t)+\lambda(t_0))| \\
+ |f^\xi_{t_0}(\gamma^
\lambda_{t_0}(t)+\lambda(t_0))-f^\lambda_{t_0}(\gamma^\lambda_{t_0}(t)+\lambda(t_0))|
\end{multline}
where $f_{t_0}^\xi$ and $f_{t_0}^\lambda$ are reverse Loewner flows (see Section \ref{s:def}), and $\gamma^\xi_{t_0}$ and $\gamma^\lambda_{t_0}$ are the Loewner traces of $\xi$ and $\lambda$ started at time $t_0$ (see Section \ref{s:compare} for definitions). Which inequality should one use? 

The right-hand sides of (\ref{e:diff}) and (\ref{e:diff1}) have two things in common. They contain two terms. One is the difference between two conformal maps evaluated at the same point. The other term is the difference between the images of two points under the same map.

The first term of (\ref{e:diff1}) can be estimated since the expectation of the moments of $(f^\xi_{t_0})'$ have been studied carefully; see \cite{JVL}. This is the strategy used in \cite{Tran}. However, upon investigating, one  needs the expected moments of $(f^\xi_{t_0})'$ \emph{conditioned} on $\xi$ (which is a multiple of Brownian motion) close to a given $\lambda$, which is not known.

It turns out that the inequality (\ref{e:diff}) is approachable. To estimate the second term in the right-hand side of (\ref{e:diff}), we want the map $f^\lambda_{t_0}$ to be uniformly continuous, uniformly in $t_0$. This is true for sufficiently nice $\lambda$. This is where we impose the condition for $\lambda$ in Theorem \ref{t:main}.

To control the distance between $\gamma^\xi_{t_0}(t)$ and $\gamma^\lambda_{t_0}(t)$, we just need to observe that when $|t_2-t_0|$ is small, both points stay within a small box around $0$; see Lemma \ref{t:hull}.

For the first term of (\ref{e:diff}), we will apply (\ref{e:f1-f2}) of Lemma \ref{t:f1-f2}. This lemma concerns the difference between two conformal maps driven by two driving functions. Roughly speaking, it tells us that
\begin{equation*}
    |f^\xi_{t_0}(\gamma^\xi_{t_0}(t)+\xi(t_0))-f^\lambda_{t_0}(\gamma^\xi_{t_0}(t)+\xi(t_0))| \lesssim  \frac{\|\xi-\lambda\|_{\infty,[0,t_0]}}{\Im \gamma^\xi_{t_0}(t)}
\end{equation*}
where $a \lesssim b$ means $a \le Cb$ for some fixed constant $C>0$.

We get an estimate that can go arbitrarily bad if  $\gamma^\xi_{t_0}(t)$ gets close to the real line.  Note that $\gamma^\xi_{t_0}$ depends only on the increments of $\xi$ from $t_0$ onwards. Since Brownian increments on disjoint time intervals are independent, we can ``safely'' require a smaller value for $\|\xi-\lambda\|_{\infty,[0,t_0]}$, depending on $\inf_{t\in [t_1,t_2]} \Im \gamma^\xi_{t_0}(t)$. 
 
The aforementioned argument works for \SLEk{} with $\kappa\leq 4$ since a.s. \linebreak$\inf_{t\in [t_1,t_2]}\Im \gamma^\xi_{t_0}>0$ given fixed $t_0<t_1<t_2$. The situation becomes more complicated when $\kappa>4$ since 
\begin{equation*}
    \P(\inf_{t\in [t_1,t_2]}\Im \gamma^\xi_{t_0}=0)>0.
\end{equation*}

At the end, we show that it will not happen provided that $\xi$ is close to $\lambda$, i.e.
\begin{equation*}
    \P(\inf_{t\in [t_1,t_2]}\Im \gamma^\xi_{t_0}>0 \mid \xi \text{ close to } \lambda \text{ on } [t_0, t_2])=1.
\end{equation*}
This is another place where we will use the properties of functions in $\mathcal{D}$.

\subsection{Organization of the paper}
In \cref{s:def}, we gather some basic definitions and facts. In \cref{s:compare}, we prove a lemma comparing two deterministic Loewner curves. Then we use it in Section \ref{s:k_le_4} to prove Proposition \ref{p:main} in the case $\kappa\leq 4$. In Section \ref{s:k_gen}, we prove a lemma that generalizes the proof of Proposition \ref{p:main} to all $\kappa > 0$. In \cref{s:other_variants}, we discuss further characterisations of the support and some open questions.

\subsection{Acknowledgement}

We would like to thank Steffen Rohde and Peter Friz for various discussions. The authors acknowledge the financial support from the European Research Council (ERC) through a Consolidator Grant \#683164 (PI: Peter Friz).  We also thank Peter Friz and Vlad Margarint for suggestions regarding the presentation of the paper. We thank the referees for helpful comments and inspiring questions.


\section{Definitions and properties}\label{s:def}

\textbf{Definition of the Loewner map.} Let $\lambda\in C([0,1],\mathbb{R}) $. Consider the family of Loewner equations with different initial values:
\begin{alignat*}{2}
\partial_t g_t(z) &= \frac{2}{g_t(z)-\lambda(t)}, &\quad t\geq 0,\\
g_0(z) &= z, & z \in \mathbb{H}.
\end{alignat*}

For each $z\in \mathbb{H}$, there exists $T_z\in (0,\infty]$ where the equation has solution up to time $T_z$ at which $\lim_{t\to T_z^-} |g_t(z)-\lambda(t)|=0$. Define $K_t = \{z\in \overline{\mathbb{H}}: T_z\leq t\}$ for each $t\geq 0$. We call $K_t$ a compact $\H$-hull. One can show that $g_t$ is a conformal map from $\mathbb{H}\backslash K_t$ onto $\mathbb{H}$.

The following lemma concerns how big the hull $K_t$ is. See \cite[Lemma 3.2]{Wong} for a (trivial) proof.
\begin{lemma}\label{t:hull}
	Let $(K_t)$ be the hulls generated by a driving function $\lambda$. Then for all $z\in K_t$,
	\begin{align*}
	    |\Re z| \leq \sup_{s\in [0,t]} |\lambda(s)| \quad \text{and} \quad \Im z \leq 2 \sqrt{t}.
	\end{align*}
\end{lemma}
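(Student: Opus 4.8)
The plan is to track the Loewner flow of a single point and show that it cannot be swallowed before time $t$ unless the point lies inside the asserted box. Fix $z=x+iy\in\overline{\mathbb{H}}$ with $y>0$ (points on $\mathbb{R}$ are trivial for the imaginary-part bound and are handled by the same computation with $v_0=0$ for the real-part bound), write $g_s(z)=u_s+iv_s$ for $s<T_z$, and set $M\defeq\sup_{s\in[0,t]}|\lambda(s)|$. Splitting the Loewner ODE into real and imaginary parts gives
\begin{equation*}
\dot u_s=\frac{2\,(u_s-\lambda(s))}{(u_s-\lambda(s))^2+v_s^2},\qquad \dot v_s=\frac{-2\,v_s}{(u_s-\lambda(s))^2+v_s^2}.
\end{equation*}
Since $z\in K_t$ means exactly that $T_z\le t$, and $T_z$ is characterised by $|g_s(z)-\lambda(s)|\to 0$ as $s\uparrow T_z$, it suffices (by contraposition) to show: if $z$ lies outside the box $[-M,M]\times[0,2\sqrt t]$, i.e. $|\Re z|>M$ or $\Im z>2\sqrt t$, then $|g_s(z)-\lambda(s)|$ is bounded away from $0$ on $[0,t]\cap[0,T_z)$, which forces $T_z>t$ and hence $z\notin K_t$.

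For the imaginary-part bound I use only the $v$-equation: since $(u_s-\lambda(s))^2\ge 0$,
\begin{equation*}
\frac{d}{ds}\,v_s^2=\frac{-4\,v_s^2}{(u_s-\lambda(s))^2+v_s^2}\ge -4,
\end{equation*}
so $v_s^2\ge y^2-4s$ for all $s$ in the interval of existence. If $y>2\sqrt t$, then $v_s^2\ge y^2-4t>0$ on $[0,t]$, hence $|g_s(z)-\lambda(s)|\ge v_s$ stays bounded below there, so $T_z>t$ and $z\notin K_t$.

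For the real-part bound, suppose $x>M$ (the case $x<-M$ is symmetric). I first claim $u_s>\lambda(s)$ throughout $[0,t]\cap[0,T_z)$. If not, let $s_0$ be the first time $u_{s_0}=\lambda(s_0)$; since $u_0=x>M\ge\lambda(0)$ and $u,\lambda$ are continuous we have $s_0>0$, and on $[0,s_0)$ the claim holds, so $\dot u_s>0$ there, whence $u_{s_0}=\lim_{s\uparrow s_0}u_s\ge u_0=x>M\ge\lambda(s_0)$, a contradiction. Consequently $u_s$ is nondecreasing on $[0,t]\cap[0,T_z)$, so $u_s-\lambda(s)\ge u_0-M=x-M>0$ there, giving $|g_s(z)-\lambda(s)|\ge u_s-\lambda(s)\ge x-M>0$; thus $T_z>t$ and $z\notin K_t$. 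Combining the two contrapositives yields both bounds of the lemma.

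There is no genuine obstacle here — this is the ``trivial'' estimate attributed to \cite[Lemma 3.2]{Wong} — but two points deserve care. First, $\lambda$ is only assumed continuous, so I deliberately avoid differentiating $u_s-\lambda(s)$ and argue instead via the differential inequality for $v_s^2$, monotonicity of $u_s$, and a first-passage-time comparison. Second, the conclusion ``$T_z>t$'' must be extracted purely from the \emph{definition} of the swallowing time: a uniform positive lower bound on $|g_s(z)-\lambda(s)|$ over $[0,t]\cap[0,T_z)$ is incompatible with $|g_s(z)-\lambda(s)|\to 0$ at $T_z$, hence rules out $T_z\le t$ — no regularity or compactness of the hulls $K_s$ is needed.
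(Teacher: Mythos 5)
Your proof is correct and is essentially the standard argument that the paper delegates to the cited reference (Wong's Lemma 3.2): split the Loewner ODE into real and imaginary parts, get $\frac{d}{ds}v_s^2\ge -4$ for the height bound and a monotonicity/first-passage comparison of $u_s$ against $\sup|\lambda|$ for the width bound, then conclude via the definition of the swallowing time. Nothing to add.
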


 If for every $t\geq 0$, the limit
\begin{align*}
    \gamma(t) \defeq \lim_{\mathbb{H} \ni z\to 0} g_t^{-1}(z+\lambda(t))
\end{align*}
exists and is continuous in $t\in [0,1]$, then $\mathbb{H}\backslash K_t$ is the unbounded component in $\mathbb{H}$ of $\mathbb{H}\backslash \gamma[0,t]$. The curve $\gamma\subset \overline{\mathbb{H}}$ is called the Loewner curve driven by $\lambda$. We call a Loewner curve simple if it intersects neither itself nor $\mathbb{R} \setminus \{\gamma(0)\}$. In that case, $K_t = \gamma[0,t]$ for all $t$.

For each $t\geq 0$, let
\begin{equation*}
    f_t = g_t^{-1} \quad \text{and} \quad \hat f_t=f_t(\cdot + \lambda(t)).
\end{equation*}
The maps $f_t$ and $\hat f_t$ are conformal on the upper half-plane $\mathbb{H}$. The latter is a centred version of $f_t$. To emphasize the dependence on $\lambda$, we also use notations $\gamma^\lambda$, $f_t^\lambda$, and the likes.

Let us denote $\Omega\subset C([0,1], \mathbb{R})$ the set of $\lambda$ that give rise to a curve. 

We call the map $\lambda \mapsto \gamma^\lambda$ from $\Omega$ to $C([0,1], \overline{\mathbb{H}})$ the Loewner map. It is known that:
\begin{itemize}
	\item $\Omega$ is not a convex space. Moreover, $\lambda\in \Omega$ does not imply $a\lambda\in \Omega$ for $a>0$. See \cite{LMR}.
	
	\item It follows from \cite{Lind05, LMR} that if $\lambda$ has local 1/2-H\"older norm less than 4, then $\lambda\in \Omega$. In particular, $C^\infty([0,1]) \subset W^{1,2}([0,1]) \subset \Omega$. 
	
	\item Let $\P$ be the Wiener measure on $C([0,1],\mathbb{R})$. For each $\kappa\geq 0$, $\P(\{\lambda: \sqrt{\kappa} \lambda \in \Omega\})=1.$
	
	\item We do not know whether $\P(\{\lambda: \sqrt{\kappa}\lambda\in  \Omega, ~\forall \kappa\}) = 1.$
\end{itemize}

\medskip

\noindent\textbf{The space $\mathcal{D}$.} 

We say that $\lambda$ has local $1/2$-H\"older norm less than $M>0$ if there exists $\delta>0$ such that
\begin{equation*}
    \sup_{|s-t|<\delta} \frac{|\lambda(s)-\lambda(t)|}{\sqrt{|s-t|}}<M.
\end{equation*}

\begin{definition} \label{d:D}
We say that $\lambda\in \mathcal{D}$ if $\lambda(0)=0$ and it has  locally vanishing $1/2$-H\"older norm, that is
\begin{equation*}
    \lim_{\delta\to 0}\sup_{|s-t|<\delta} \frac{|\lambda(s)-\lambda(t)|}{\sqrt{|s-t|}} = 0.
\end{equation*}
\end{definition}

\begin{proposition}
    Let $\lambda\in \mathcal{D}$. Then 
    \begin{itemize}
        \item $\lambda$ generates a simple curve.
        \item There is a function $\delta(\cdot,\lambda):(0,\infty)\to (0,\infty)$ such that 
            \begin{equation}\label{e:assum}
            |z_1-z_2| \le \delta(\varepsilon;  \lambda) \implies | f^\lambda_t(z_1)- f^\lambda_t(z_2)| \le \varepsilon \quad \forall t \in [0,1].
            \end{equation}
    \end{itemize}
\end{proposition}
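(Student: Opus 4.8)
\medskip
\noindent\textbf{Proof proposal.}
The plan is to prove both assertions by decomposing $[0,1]$ into finitely many subintervals adapted to $\lambda$ and reducing everything to the quasi-slit estimates of Marshall--Rohde and Lind.

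\smallskip
\noindent\emph{First assertion.}
By \cref{d:D}, for every $M>0$ there is $\delta>0$ with $|\lambda(s)-\lambda(t)|<M\sqrt{|s-t|}$ whenever $|s-t|<\delta$; in particular $\lambda$ has local $1/2$-H\"older norm less than $4$, so $\lambda\in\Omega$ by \cite{Lind05,LMR}. To see that $\gamma^\lambda$ is simple, I would fix $M\in(0,4)$, take a partition $0=s_0<s_1<\dots<s_N=1$ of mesh less than the corresponding $\delta$, and induct on $j$ that $K_{s_j}=\gamma^\lambda[0,s_j]$ is a simple arc meeting $\mathbb R$ only at $0$. The increment $\mu_j(r)\defeq\lambda(s_{j-1}+r)$, $r\in[0,s_j-s_{j-1}]$, has $1/2$-H\"older norm at most $M<4$, hence by \cite{MR,Lind05} generates a slit $\beta_j$ with $\beta_j\setminus\{\mu_j(0)\}\subset\mathbb H$. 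Since $K_{s_{j-1}}$ is inductively a simple arc, $g^\lambda_{s_{j-1}}$ extends continuously to $\overline{\mathbb H\setminus K_{s_{j-1}}}$, so $f^\lambda_{s_{j-1}}$ extends continuously to $\overline{\mathbb H}$ with $f^\lambda_{s_{j-1}}(\mu_j(0))=\gamma^\lambda(s_{j-1})$; thus $K_{s_j}\setminus K_{s_{j-1}}=f^\lambda_{s_{j-1}}(\beta_j)$ is a simple arc attached to $K_{s_{j-1}}$ only at the tip $\gamma^\lambda(s_{j-1})$ and otherwise contained in $\mathbb H\setminus K_{s_{j-1}}$, so $K_{s_j}$ is a simple arc meeting $\mathbb R$ only at $0$ (the base case $j=1$ being immediate since $f^\lambda_0=\mathrm{id}$, $K_{s_1}=\beta_1$). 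If the implication ``local $1/2$-H\"older norm $<4\implies$ simple curve'' is available in the literature in this form, one may of course simply cite it instead.

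\smallskip
\noindent\emph{Second assertion.}
Using the same partition and the composition property of the Loewner flow, I would write, for $t\in[s_{j-1},s_j]$,
\[
f^\lambda_t=\psi_1\circ\psi_2\circ\dots\circ\psi_{j-1}\circ\psi_{j,t},
\]
where $\psi_i$ ($1\le i\le j-1$) is the inverse Loewner map of the increment $\mu_i$ and $\psi_{j,t}$ is the inverse Loewner map of $\mu_j$ restricted to $[0,t-s_{j-1}]$ (so $\psi_{j,s_{j-1}}=\mathrm{id}$). Each factor is the inverse conformal map of $\mathbb H$ onto $\mathbb H$ minus a quasi-slit whose quasi-slit constant is controlled by $M$ and whose half-plane capacity is at most twice the mesh; by \cite{MR,Lind05} such a map is H\"older continuous on all of $\overline{\mathbb H}$ with an exponent $\alpha_0=\alpha_0(M)>0$ and a constant depending only on $M$ and the mesh --- near the slit this is the quasi-slit estimate, away from it the map is a bounded perturbation of the identity. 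These bounds are uniform in $t$, since $\mu_j|_{[0,t-s_{j-1}]}$ still has $1/2$-H\"older norm at most $M$ and capacity at most twice the mesh. Composing the at most $N$ factors, $f^\lambda_t$ is H\"older continuous on $\overline{\mathbb H}$ with exponent at least $\alpha_0(M)^N>0$ and a constant depending only on $\lambda$, uniformly in $t\in[0,1]$; writing $\alpha(\lambda)$ and $c(\lambda)$ for these, the choice $\delta(\varepsilon;\lambda)\defeq\min\{1,(\varepsilon/c(\lambda))^{1/\alpha(\lambda)}\}$ yields \eqref{e:assum}.

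\smallskip
\noindent\emph{Main obstacle.}
The substantive content is the Marshall--Rohde/Lind machinery: the passage from a bound on the $1/2$-H\"older norm of the driving function to the quasi-slit property, and thence to H\"older continuity of the conformal map \emph{up to the real line}. Without this there is no control near $\mathbb R$, where $f_t$ has square-root type singularities at the base $\gamma(0)$ and the tip $\gamma(t)$ --- exactly where the ``Christmas tree'' behaviour lives --- and the hypothesis $\lambda\in\mathcal D$ is used precisely to localise to intervals on which that machinery applies. The delicate point is the uniformity in $t$ of the modulus of continuity: the H\"older exponent degrades each time the finitely many increment maps are composed, so one must check it stays bounded below, which works only because for $\lambda\in\mathcal D$ the number $N$ of increments is finite and fixed by $\lambda$.
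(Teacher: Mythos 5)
Your argument is correct, but it takes a more hands-on route than the paper, which handles this proposition essentially by citation: the proof of \cite[Theorem 4.1]{LMR} already shows that a driver with locally small $1/2$-H\"older norm generates a curve $\gamma^\lambda$ such that $\H\setminus\gamma^\lambda[0,t]$ is a quasi-slit half-plane --- in particular a John domain with constant uniform in $t$ --- and then \cite[Corollary 5.3]{Pom} gives H\"older continuity of $f^\lambda_t$ on bounded sets with exponent and constant depending only on the John constant, hence uniformly in $t$. Your localisation-and-concatenation argument for simplicity is in effect a reconstruction of the LMR localisation, and your composition argument replaces the ``uniform John constant'' observation: it works, but at the price of degrading the H\"older exponent to $\alpha_0(M)^N$, whereas the global quasi-slit statement yields a single exponent independent of the partition (for \eqref{e:assum} only \emph{some} modulus of continuity is needed, so this costs nothing here). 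Two small points to tidy up. First, the quasi-slit/John estimates give H\"older continuity on bounded subsets of $\overline{\H}$, not on all of $\overline{\H}$: a map with $f_t(z)=z+O(1/z)$ cannot be globally $\alpha$-H\"older for $\alpha<1$. This is harmless, since near infinity $f_t$ is a Lipschitz perturbation of the identity, and the application of \eqref{e:assum} in \cref{t:fixed_interval} only involves points in a fixed bounded region; but you should state the conclusion as a uniform modulus of continuity rather than a global H\"older bound. Second, in the composition step you should record that the intermediate images of the relevant points stay in a fixed bounded set (which follows from \cref{t:hull} and the normalisation of $f_t$ at infinity), so that the per-factor H\"older constants on bounded sets actually compose.
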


A proof of this proposition can be found in the proof of \cite[Theorem 4.1]{LMR}. There they have shown that $\lambda$ generates a curve, and $\mathbb{H} \setminus \gamma^\lambda$ is a quasi-slit half-plane, therefore a John domain (see \cite[Section 5.2]{Pom} for a definition). It follows from \cite[Corollary 5.3]{Pom} that $f_t$ (as a conformal map from $\mathbb{H}$ to a John domain) is Hölder continuous on bounded sets, with Hölder constant and exponent depending on the John domain constant.

A  particular example: When $\lambda\equiv 0$, then $f^\lambda_{t}(z) = \sqrt{z^2-4t}$ which is H\"older-continuous in $z$ on any bounded set, uniformly in $t$. 

The following will be our main ingredient to estimate the difference between conformal maps derived from the Loewner equation.

\begin{lemma}[{\cite[Lemma 2.3]{JVRW}}]
	\label{t:f1-f2}
	Let $f^1_t$ and $f^2_t$ be two inverse Loewner maps with $U^1$ and $U^2$, respectively, as driving terms. Then for $t\geq 0$ and $z=x+iy \in \mathbb{H}$
	\begin{multline*}\label{e:f1-f2-full}
	|f^1_t(z)-f^2_t(z)| \\
	\le \|U^1-U^2\|_{\infty, [0,t]} \, \exp\left(\frac{1}{2}\left[\log\frac{I_{t,y}|(f^1_t)'(z)|}{y}\log\frac{I_{t,y}|(f^2_t)'(z)|}{y}\right]^{1/2}+\log\log\frac{I_{t,y}}{y}\right)
	\end{multline*}
	where $I_{t,y}=\sqrt{4t+y^2}$.
	
	Moreover,
	\begin{equation}\label{e:f1-f2}
	|f^1_t(z)-f^2_t(z)| \le \|U^1-U^2\|_{\infty, [0,t]} \left( \frac{I_{t,y}}{y}-1 \right).
	\end{equation}
\end{lemma}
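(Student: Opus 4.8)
The plan is to pass to the reverse Loewner flow. Given a driving function $U$ and the horizon $t$, let $(\phi_s)_{s\in[0,t]}$ solve $\partial_s\phi_s(w) = -\tfrac{2}{\phi_s(w)-U(t-s)}$ with $\phi_0(w)=w$; one checks (e.g. from $\phi_s(w)=\sqrt{w^2-4s}$ when $U\equiv 0$) that $\phi_t=f_t$, so $(\phi_t)'(z)=(f_t)'(z)$, and the two reverse flows attached to $U^1,U^2$ are driven by $s\mapsto U^i(t-s)$, whose sup-distance on $[0,t]$ equals $b:=\|U^1-U^2\|_{\infty,[0,t]}$. Writing $w^i_s:=\phi^i_s(z)-U^i(t-s)$, $\Delta_s:=U^1(t-s)-U^2(t-s)$ and $D_s:=\phi^1_s(z)-\phi^2_s(z)$, subtraction gives $\partial_s D_s = \tfrac{2(D_s-\Delta_s)}{w^1_s w^2_s}$ with $|\Delta_s|\le b$, and since $D_0=0$, variation of constants yields
\[
D_t = P_t\int_0^t \frac{-2\Delta_s}{P_s\,w^1_s\,w^2_s}\,ds,\qquad P_s:=\exp\Big(\int_0^s\frac{2\,dr}{w^1_r w^2_r}\Big),\ P_0=1,
\]
whence $|D_t|\le b\,|P_t|\int_0^t\tfrac{2\,ds}{|P_s|\,|w^1_s|\,|w^2_s|}$. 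I would also record the elementary monotonicity facts of the reverse flow obtained by differentiating: $\partial_s\log\Im\phi^i_s=\tfrac{2}{|w^i_s|^2}\ge 0$, so $\int_0^t\tfrac{2\,ds}{|w^i_s|^2}=\log\tfrac{\Im f^i_t(z)}{y}$; the coarser $\partial_s(\Im\phi^i_s)^2\le 4$ gives $\Im f^i_t(z)\le I_{t,y}$; and $\partial_s\log\big(|(\phi^i_s)'(z)|\,\Im\phi^i_s\big)\ge 0$ gives $|(f^i_t)'(z)|\,\Im f^i_t(z)\ge y$, so that $|(f^i_t)'(z)|\in[y/I_{t,y},\ I_{t,y}/y]$, the logarithms $\log\tfrac{I_{t,y}|(f^i_t)'(z)|}{y}$ in the statement are nonnegative, and $\Im f^i_t(z)\le\tfrac{I_{t,y}^2|(f^i_t)'(z)|}{y}$.

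For the weaker estimate \eqref{e:f1-f2} it is enough to throw away the oscillation of $P_s$: the bound $|P_t/P_s|\le\exp\big(\int_s^t\tfrac{2\,dr}{|w^1_r||w^2_r|}\big)$ turns the identity above into the Grönwall inequality $|D_t|\le b(e^A-1)$ with $A:=\int_0^t\tfrac{2\,ds}{|w^1_s||w^2_s|}$, and the arithmetic--geometric mean inequality gives $A\le\tfrac12\int_0^t\big(\tfrac{2}{|w^1_s|^2}+\tfrac{2}{|w^2_s|^2}\big)\,ds=\tfrac12\log\tfrac{\Im f^1_t(z)\,\Im f^2_t(z)}{y^2}\le\log\tfrac{I_{t,y}}{y}$, which is exactly \eqref{e:f1-f2}.

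For the sharp first estimate of \cref{t:f1-f2} one cannot afford to discard the sign of $\Re\tfrac{2}{w^1_s w^2_s}$: when both flow points stay near the imaginary axis this real part is negative, making $|P_t|$ small, and the crude Grönwall bound is then off by a factor $\asymp\tfrac{I_{t,y}/y}{\log(I_{t,y}/y)}$. The idea is to keep $|D_t|\le b\,|P_t|\int_0^t\tfrac{2\,ds}{|P_s||w^1_s||w^2_s|}$ and exploit the identity
\[
\tfrac12\log\big(|(f^1_t)'(z)|\,|(f^2_t)'(z)|\big)-\log|P_t| = \Re\int_0^t\Big(\tfrac{1}{w^1_r}-\tfrac{1}{w^2_r}\Big)^2 dr,
\]
which shows $|P_t|$ is comparable to $\sqrt{|(f^1_t)'(z)|\,|(f^2_t)'(z)|}$ up to an error governed by $\int_0^t\tfrac{(|D_r|+b)^2}{|w^1_r w^2_r|^2}\,dr$; then bound $\int_0^t\tfrac{2\,ds}{|P_s||w^1_s||w^2_s|}$ by Cauchy--Schwarz against $\int_0^t\tfrac{2\,ds}{|P_s||w^i_s|^2}$ (an integration by parts rewrites the latter as $\tfrac{1}{|P_t|}\log\tfrac{\Im f^i_t(z)}{y}$ plus a remainder), and finally feed in $\Im f^i_t(z)\le I_{t,y}$ together with $\Im f^i_t(z)\le\tfrac{I_{t,y}^2|(f^i_t)'(z)|}{y}$. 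This is what produces the product $\big[\log\tfrac{I_{t,y}|(f^1_t)'(z)|}{y}\log\tfrac{I_{t,y}|(f^2_t)'(z)|}{y}\big]^{1/2}$ inside the exponential, while the additive $\log\log\tfrac{I_{t,y}}{y}$ is the residue of the Cauchy--Schwarz and integration-by-parts steps.

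The main obstacle is precisely this last point: simultaneously controlling the oscillatory integrating factor $P_s$ — that is, the error in $\log|P_t|\approx\tfrac12\log(|(f^1_t)'(z)|\,|(f^2_t)'(z)|)$, which feeds back through $D_r$ — and the weighted integral $\int_0^t\tfrac{2\,ds}{|P_s||w^1_s||w^2_s|}$, so that the cancellation making the sharp estimate genuinely better than \eqref{e:f1-f2} is not lost. Everything else is the routine differentiation and Grönwall bookkeeping above.
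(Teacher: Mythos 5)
The paper itself offers no proof of this lemma --- it is quoted verbatim from \cite[Lemma~2.3]{JVRW} --- so the comparison is really against that source. Your derivation of the second bound \eqref{e:f1-f2} is complete and correct, and it is essentially the argument of the cited reference: pass to the reverse flow (correctly identified with $f_t$ via $h_s = g_{t-s}\circ g_t^{-1}$), subtract the two ODEs to get $\partial_s D_s = 2(D_s-\Delta_s)/(w^1_s w^2_s)$, apply variation of constants, bound the integrating factor by its modulus to get the Gr\"onwall form $|D_t|\le b(e^A-1)$, and finish with AM--GM together with the identities $\int_0^t 2|w^i_s|^{-2}\,ds=\log(\Im f^i_t(z)/y)$ and $\Im f^i_t(z)\le I_{t,y}$. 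All the auxiliary monotonicity facts you record are correct. Since the remark following the lemma states that only \eqref{e:f1-f2} is used in the paper, this covers everything that is actually needed.

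For the first, sharper inequality, however, you give a programme rather than a proof. The identity $\tfrac12\log\bigl(|(f^1_t)'(z)||(f^2_t)'(z)|\bigr)-\log|P_t|=\Re\int_0^t(1/w^1_r-1/w^2_r)^2\,dr$ is correct, but its right-hand side is controlled by $\int_0^t(|D_r|+b)^2|w^1_rw^2_r|^{-2}\,dr$, i.e.\ by the very quantity you are trying to bound, and you do not close this self-referential estimate; nor do you actually carry out the Cauchy--Schwarz and integration-by-parts steps that are supposed to produce the factor $\bigl[\log\tfrac{I_{t,y}|(f^1_t)'(z)|}{y}\log\tfrac{I_{t,y}|(f^2_t)'(z)|}{y}\bigr]^{1/2}$ and the additive $\log\log(I_{t,y}/y)$. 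You acknowledge this yourself. So, judged as a proof of the full statement, the first display remains unproved; judged against what the paper uses, your argument is complete.
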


\begin{remark}
	The inequality (\ref{e:f1-f2}) is the one that will be used. We do not use the full strength of Lemma \ref{t:f1-f2}. What we really need is an inequality of the form
	\begin{equation*}
	    |f^1_t(z)-f^2_t(z)| \le \Phi_1(\|U^1-U^2\|_{\infty, [0,t]}) \Phi_2(y)
	\end{equation*}
	where $\Phi_1,\Phi_2$ are two functions such that $\Phi_2(y)>0$ and $\Phi_1(0^+)=0$. Therefore, one can replace (\ref{e:f1-f2}) by an inequality in  \cite[Proposition 4.47]{Law} which says that
	\begin{equation*}
	    |f^1_t(z)-f^2_t(z)| \le \|U^1-U^2\|_{\infty, [0,t]} \left( e^{c_0t/y^2}-1 \right)
	\end{equation*}
	for some constant $c_0>0$.
	
\end{remark}


\section{Comparing two Loewner curves. A deterministic estimate.}
\label{s:compare}

We recall the following property of Loewner chains. For $\lambda \in C([0,1], \mathbb{R})$, we can run the Loewner chain from time $t_0 > 0$ instead of $0$, i.e. solve
\begin{align*}
    \partial_t g_{t_0,t}(z) &= \frac{2}{g_{t_0,t}(z)-(\lambda(t)-\lambda(t_0))}, \quad t \ge t_0,\\
    g_{t_0,t_0}(z) &= z.
\end{align*}
We will call the corresponding hulls $K_{t_0,t}$, and the trace (if it exists) $\gamma_{t_0}$.

If $\lambda$ generates a trace on $[0,t_0]$, and $\lambda(\cdot)-\lambda(t_0)$ generates a trace on $[t_0,1]$, then $\lambda$ generates a trace on $[0,1]$, and $\gamma(t) = \hat f_{t_0}(\gamma_{t_0}(t))$.

We will use the following lemma to compare the difference between two Loewner curves.
\begin{lemma}\label{t:fixed_interval} 
	Suppose $\lambda \in \mathcal{D}$ with the function $\delta(\cdot,\lambda)$ as in (\ref{e:assum}).
	Let $0\le t_0<t_1<t_2\le 1$ with $t_{k+1}-t_k\le\Delta t$, $k=0,1$.
	
	Let $\xi \in C([0,1],\mathbb{R})$ generate a Loewner trace $\gamma^\xi$, such that $\xi(t)-\xi(t_0)$, $t \in [t_0,t_2]$, generates a trace $\gamma^\xi_{t_0}$, and suppose 
	\begin{equation*}
	    c_{t_0,t_2} \defeq  \inf_{t \in [t_1,t_2]} \Im \gamma^\xi_{t_0}(t) > 0.
	\end{equation*}
	
	Moreover, suppose $\|\xi-\lambda\|_{[0,t_0]} \le \varepsilon_0$, $\|(\xi-\xi(t_0))-(\lambda-\lambda(t_0))\|_{[t_0,t_2]} \le \varepsilon_1$, and $\varepsilon_0, \varepsilon_1 \le \bar\varepsilon$ where $\varepsilon_0, \varepsilon_1, \bar\varepsilon > 0$.
	
	Then for any $a > 0$, $t\in[t_1,t_2]$ we have
	\begin{equation*}
	|\gamma^\xi(t)-\gamma^\lambda(t)| \le a + \varepsilon_0 c_{t_0,t_2}^{-1} \quad \text{if} \quad \varphi(2\Delta t; \lambda)+5\bar\varepsilon \le \delta(a; \lambda),
	\end{equation*}
	where $\varphi(\cdot; \lambda)$ is an increasing function with $\varphi(0^+;\lambda)=0$, depending on $\lambda$.
\end{lemma}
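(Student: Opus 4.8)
\textbf{Proof plan for Lemma~\ref{t:fixed_interval}.}
The plan is to implement exactly the decomposition~\eqref{e:diff} with base time $t_0$, and to control each of the two resulting terms using the ingredients already assembled. Fix $t \in [t_1,t_2]$. Using the composition identity $\gamma^\xi(t) = \hat f^\xi_{t_0}(\gamma^\xi_{t_0}(t))$ and likewise for $\lambda$, write $z^\xi = \gamma^\xi_{t_0}(t) + \xi(t_0)$ and $z^\lambda = \gamma^\lambda_{t_0}(t) + \lambda(t_0)$, so that
\begin{equation*}
|\gamma^\xi(t)-\gamma^\lambda(t)| \le |f^\xi_{t_0}(z^\xi) - f^\lambda_{t_0}(z^\xi)| + |f^\lambda_{t_0}(z^\xi) - f^\lambda_{t_0}(z^\lambda)|.
\end{equation*}
The first term is a difference of two conformal maps at the same point; the second is the image of two nearby points under the same (nice) map $f^\lambda_{t_0}$.

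For the first term, I would apply~\eqref{e:f1-f2} of Lemma~\ref{t:f1-f2} with $U^1 = \xi|_{[0,t_0]}$, $U^2 = \lambda|_{[0,t_0]}$, $t = t_0$, and $z = z^\xi = x + iy$ where $y = \Im\gamma^\xi_{t_0}(t) \ge c_{t_0,t_2} > 0$ (by assumption, since $t \in [t_1,t_2]$). This gives
\begin{equation*}
|f^\xi_{t_0}(z^\xi) - f^\lambda_{t_0}(z^\xi)| \le \|\xi-\lambda\|_{\infty,[0,t_0]}\Bigl(\tfrac{I_{t_0,y}}{y} - 1\Bigr) \le \varepsilon_0\Bigl(\tfrac{\sqrt{4t_0+y^2}}{y} - 1\Bigr).
\end{equation*}
Here I would be slightly careful about the stated conclusion: to get the clean bound $\varepsilon_0 c_{t_0,t_2}^{-1}$ one uses $\sqrt{4t_0+y^2}/y - 1 \le 2\sqrt{t_0}/y \le 2/c_{t_0,t_2}$ when $t_0 \le 1$ (the factor $2\sqrt{t_0} \le 2$; possibly the intended reading absorbs the constant, or uses $\sqrt{4t_0+y^2}-y \le 2\sqrt{t_0}$ directly — in any case the first term is $\le C\varepsilon_0 c_{t_0,t_2}^{-1}$ for a harmless constant, and I would track the exact constant when writing the details).

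For the second term, the idea is to invoke the uniform continuity~\eqref{e:assum} of $f^\lambda_{t_0}$: it suffices to show $|z^\xi - z^\lambda| \le \delta(a;\lambda)$. Now
\begin{equation*}
|z^\xi - z^\lambda| \le |\gamma^\xi_{t_0}(t)| + |\gamma^\lambda_{t_0}(t)| + |\xi(t_0) - \lambda(t_0)|.
\end{equation*}
The last term is $\le \varepsilon_0 \le \bar\varepsilon$. For the first two, both $\gamma^\xi_{t_0}(t)$ and $\gamma^\lambda_{t_0}(t)$ lie in the hull $K_{t_0,t}$ generated over a time interval of length $t - t_0 \le t_2 - t_0 \le 2\Delta t$, so by Lemma~\ref{t:hull} their imaginary parts are $\le 2\sqrt{2\Delta t}$ and their real parts are bounded by the sup of the (recentred) driving function over $[t_0,t]$. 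Here I would use the hypotheses to control those driving-function sups: $|\xi(s)-\xi(t_0)| \le |\lambda(s)-\lambda(t_0)| + \varepsilon_1 \le \omega_\lambda(2\Delta t) + \bar\varepsilon$ where $\omega_\lambda$ is the modulus of continuity of $\lambda$ (which, since $\lambda\in\mathcal D$, is dominated by $C\sqrt{\cdot}$ near $0$), and similarly $|\lambda(s)-\lambda(t_0)| \le \omega_\lambda(2\Delta t)$. Collecting the pieces, $|z^\xi - z^\lambda| \le \varphi(2\Delta t;\lambda) + (\text{small multiple of }\bar\varepsilon)$ for a suitable increasing $\varphi$ with $\varphi(0^+;\lambda)=0$ — built from $\omega_\lambda$ and the $2\sqrt{\cdot}$ terms — and the "small multiple" is at most $5\bar\varepsilon$ after adding up the at most five $\bar\varepsilon$-sized contributions. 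Under the stated hypothesis $\varphi(2\Delta t;\lambda) + 5\bar\varepsilon \le \delta(a;\lambda)$, applying~\eqref{e:assum} yields $|f^\lambda_{t_0}(z^\xi) - f^\lambda_{t_0}(z^\lambda)| \le a$, and adding the two terms gives the claim.

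The main obstacle — and the only genuinely nontrivial point — is the bookkeeping that shows $|z^\xi - z^\lambda|$ is controlled purely in terms of $\Delta t$ and $\bar\varepsilon$, independently of $t_0$ and of the particular $\xi$. This rests on the observation that $\gamma^\xi_{t_0}$ depends only on the recentred driving function $\xi(\cdot)-\xi(t_0)$ on $[t_0,t_2]$, so that the closeness hypothesis $\|(\xi-\xi(t_0))-(\lambda-\lambda(t_0))\|_{[t_0,t_2]} \le \varepsilon_1$ together with Lemma~\ref{t:hull} confines $\gamma^\xi_{t_0}(t)$ to a small box around $0$ whose size is governed by the modulus of continuity of $\lambda$ at scale $2\Delta t$; the uniformity in $t_0$ comes from the fact that Lemma~\ref{t:hull}'s bound only sees the increment of the driving function over the window $[t_0,t]$, not its absolute values. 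I would also double-check the precise constant ($5\bar\varepsilon$ vs.\ some other multiple) by carefully enumerating where $\bar\varepsilon$-errors enter — it appears in $|\xi(t_0)-\lambda(t_0)|$, in passing from $\xi$-increments to $\lambda$-increments (twice, for the two endpoints of the window or once via the sup), and the rest is absorbed into $\varphi$ — and adjust the statement's definition of $\varphi$ accordingly.
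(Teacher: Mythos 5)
Your proposal follows essentially the same route as the paper's proof: the decomposition \eqref{e:diff} at base time $t_0$, inequality \eqref{e:f1-f2} of \cref{t:f1-f2} for the first term, and \cref{t:hull} together with the modulus of continuity of $\lambda$ and the uniform continuity \eqref{e:assum} for the second term, with $\varphi$ assembled from $\osc(\cdot;\lambda)$ and the $2\sqrt{2\Delta t}$ height bound exactly as in \eqref{e:phi}. The constant bookkeeping you flag (the factor $2\sqrt{t_0}\le 2$ in $I_{t_0,y}/y-1$, and the exact multiple of $\bar\varepsilon$) is harmless since $\varphi$ and the final constants are only specified up to such adjustments, so the argument is correct.
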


\begin{remark}
	The lemma roughly says that
	\begin{equation*}
	    \|\gamma^\xi-\gamma^\lambda\|_{[t_1,t_2]} \lesssim \Phi(\|\xi - \lambda\|_{[0,t_2]}) + \frac{\|\xi-\lambda\|_{[0,t_0]}}{c_{t_0,t_2}}
	\end{equation*}
	where $\Phi$ is an increasing function with $\Phi(0^+)=0$ that depends on the modulus of continuity of $\lambda$.
	
	Note that $c_{t_0,t_2}$ depends only on the increment $(\xi(t)-\xi(t_0))$, $t \in [t_0,t_2]$.  Therefore, if $\|\xi-\lambda\|_{[0,t_0]}$ is very small compared to $c_{t_0,t_2}$, then 	
	\begin{equation*}
	    \|\gamma^\xi-\gamma^\lambda\|_{[t_1,t_2]} \lesssim \|\xi - \lambda\|_{[t_0,t_2]}.
	\end{equation*}
	
	We also see that when $\gamma^\xi$ behaves like the ``Christmas tree'', then $c_{t_0,t_2}$ will be small. In order to prevent this behaviour, we can change $\xi$ on the interval $[0,t_0]$, making $\|\xi-\lambda\|_{[0,t_0]}$ smaller while leaving $c_{t_0,t_2}$ unchanged.
	
\end{remark}

\begin{proof}[Proof of Lemma \ref{t:fixed_interval}]
Let $\lambda$ and $\xi$ satisfy the conditions of \cref{t:fixed_interval}. Observe that $\|\xi-\lambda\|_{[t_0,t_2]} \le |\xi(t_0)-\lambda(t_0)|+\|(\xi-\xi(t_0))-(\lambda-\lambda(t_0))\|_{[t_0,t_2]} \le \varepsilon_0+\varepsilon_1$.
	
	Let $t\in[t_1,t_2]$. We follow (\ref{e:diff}) and estimate
	\begin{multline}\label{e:diffnew}
|\gamma^\xi(t)-\gamma^\lambda(t)| \le |f^\xi_{t_0}(\gamma^\xi_{t_0}(t)+\xi(t_0))-f^\lambda_{t_0}(\gamma^\xi_{t_0}(t)+\xi(t_0))| \\
+ |f^\lambda_{t_0}(\gamma^\xi_{t_0}(t)+\xi(t_0))-f^\lambda_{t_0}(\gamma^\lambda_{t_0}(t)+\lambda(t_0))|.
\end{multline}

First we estimate the second term of the right-hand side.
	
	Denoting the modulus of continuity of $\lambda$ by $\osc(\cdot;\lambda)$, we have $|\lambda(r)-\lambda(s)| \le \osc(|r-s|; \lambda)$, and consequently $|\xi(r)-\xi(s)| \le |\xi(r)-\lambda(r)|+|\lambda(r)-\lambda(s)|+|\lambda(s)-\xi(s)| \le \osc(|r-s|; \lambda)+4\bar\varepsilon$ for any $r,s \in [t_0,t_2]$. Therefore, by Lemma \ref{t:hull},
	\begin{align*}
	|\Re \gamma^\lambda_{t_0}(s)| &\le \osc(2\Delta t; \lambda),\\
	|\Im \gamma^\lambda_{t_0}(s)| &\le 2\sqrt{2\Delta t},\\
	|\Re \gamma^\xi_{t_0}(s)| &\le \osc(2\Delta t; \lambda)+4\bar\varepsilon,\\
	|\Im \gamma^\xi_{t_0}(s)| &\le 2\sqrt{2\Delta t},
	\end{align*}
	for $s \in [t_0,t_2]$.
This means that
\begin{equation}\label{e:phi}
|\gamma^\xi_{t_0}(t)+\xi(t_0)-\lambda(t_0)-\gamma^\lambda_{t_0}(t)| \le 2\osc(2\Delta t; \lambda)+5\bar\varepsilon+2\sqrt{2\Delta t} \eqdef \varphi(2\Delta t; \lambda
)+5\bar\varepsilon.
\end{equation}

Suppose $\varphi(2\Delta t; \lambda)+5\bar\varepsilon \le \delta(a; \lambda)$ for some $a > 0$. Then by the definition of $\delta$ and the above observation
\begin{align*}
|f^\lambda_{t_0}(\gamma^\xi_{t_0}(t)+\xi(t_0))-f^\lambda_{t_0}(\gamma^\lambda_{t_0}(t)+\lambda(t_0))| \le a
\end{align*}
which provides us a bound on the second term of (\ref{e:diffnew}).

For the first term of (\ref{e:diffnew}), we apply Lemma \ref{t:f1-f2} to obtain
\begin{align*}
|f^\xi_{t_0}(\gamma^\xi_{t_0}(t)+\xi(t_0))-f^\lambda_{t_0}(\gamma^\xi_{t_0}(t)+\xi(t_0))| \le \varepsilon_0 y^{-1}
\end{align*}
where $y=\Im\gamma^\xi_{t_0}(t) \ge c_{t_0,t_2}$.

Combining everything, we obtain
\begin{align*}
|\gamma^\xi(t)-\gamma^\lambda(t)| \le a + \varepsilon_0 c_{t_0,t_2}^{-1}
\end{align*}
for $t \in [t_1, t_2]$ if $\varphi(2\Delta t; \lambda)+5\bar\varepsilon \le \delta(a; \lambda)$.
\end{proof}

If we break $[0,1]$ into short sub-intervals, we can apply this argument on each sub-interval. On the very first sub-interval $t \in [0,t_1]$ we can directly estimate $|\gamma^\xi(t)-\gamma^\lambda(t)|$ with Lemma \ref{t:hull}. Together this will estimate $\|\gamma^\xi-\gamma^\lambda\|_{\infty,[0,1]}$. The precise conditions are the following.

\begin{corollary}\label{t:repeated_intervals}
	Suppose $\lambda \in \mathcal{D}$ with the function $\delta(\cdot,\lambda)$ as in (\ref{e:assum}).  	Let $0=t_0<t_1<...<t_n=1$ such that $t_k-t_{k-1}\le\Delta t$ for all $k \ge 1$. 
	
	Suppose $\xi \in C([0,1],\mathbb{R})$ with $\xi(0)=0$ generates a L\"owner trace such that
	\begin{equation*}
	    c_{t_k,t_{k+2}} \defeq \inf_{t \in [t_{k+1},t_{k+2}]} \Im \gamma^\xi_{t_k}(t) > 0 \quad \text{for all } k\ge 0,
	\end{equation*}
	where $\gamma^\xi_{t_k}$ is the L\"owner trace driven by $\xi(t_k+t)-\xi(t_k)$.
	
	Let $0<\bar\varepsilon < a$ be constants such that $\varphi(\Delta t; \lambda) < a$ and $\varphi(2\Delta t; \lambda)+5\bar\varepsilon \le \delta(a; \lambda)$, where $\varphi(\cdot; \lambda)$ is defined as in (\ref{e:phi}).
	
	Furthermore, suppose that $\varepsilon_1,...,\varepsilon_n$ are given such that $\varepsilon_k < \bar\varepsilon/2 \wedge a \, c_{t_k,t_{k+2}}$ and $\varepsilon_1 + ... + \varepsilon_k \le 2\varepsilon_k$ for all $k$, and moreover suppose that
	\begin{equation*}
	\|(\xi-\xi(t_{k-1}))-(\lambda-\lambda(t_{k-1}))\|_{\infty, [t_{k-1},t_k]} \le \varepsilon_k
	\end{equation*}
	for all $k \ge 1$.
	
	Then
	\begin{equation*}
	|\gamma^\xi(t)-\gamma^\lambda(t)| \le 3a
	\end{equation*}
	for all $t \in [0,1]$.
\end{corollary}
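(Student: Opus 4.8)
The plan is to prove \cref{t:repeated_intervals} by induction on the index $k$, applying \cref{t:fixed_interval} successively on the triples $(t_{k-1}, t_k, t_{k+1})$ — or more precisely using the structure where the estimate on $[t_k, t_{k+1}]$ uses information about $\xi - \lambda$ accumulated on $[0, t_k]$ together with the increment on $[t_k, t_{k+1}]$. First I would handle the base interval $[0, t_1]$ directly: since $\xi(0) = \lambda(0) = 0$ and both driving functions are within $\bar\varepsilon$-type control, \cref{t:hull} bounds both $\gamma^\xi$ and $\gamma^\lambda$ inside a small box around $0$ whose size is controlled by $\varphi(\Delta t; \lambda)$ and $\bar\varepsilon$; the hypothesis $\varphi(\Delta t;\lambda) < a$ (together with $\bar\varepsilon < a$) then gives $|\gamma^\xi(t) - \gamma^\lambda(t)| \le 2a \le 3a$ on $[0, t_1]$, with room to spare.

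Next, for the inductive step on $[t_k, t_{k+1}]$ with $k \ge 1$, I would set $t_0' = t_{k-1}$, $t_1' = t_k$, $t_2' = t_{k+1}$ in the notation of \cref{t:fixed_interval}. The key bookkeeping is to produce the two quantities $\varepsilon_0$ and $\varepsilon_1$ required by that lemma. For $\varepsilon_1$ we simply take the given bound $\|(\xi - \xi(t_{k-1})) - (\lambda - \lambda(t_{k-1}))\|_{\infty, [t_{k-1}, t_k]} \le \varepsilon_k$. For $\varepsilon_0 = \|\xi - \lambda\|_{\infty, [0, t_{k-1}]}$, I would show by summing telescoping increments that $\|\xi - \lambda\|_{\infty, [0, t_{k-1}]} \le \varepsilon_1 + \cdots + \varepsilon_{k-1} \le 2\varepsilon_{k-1}$, using $\xi(0) = \lambda(0) = 0$ and the triangle inequality across the partition points (each $|\xi(t_j) - \lambda(t_j)|$ is bounded by the partial sum of increment errors, and on $[t_{j-1}, t_j]$ the function $(\xi - \lambda)$ deviates from the constant $\xi(t_{j-1}) - \lambda(t_{j-1})$ by at most $\varepsilon_j$). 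This is where the hypothesis $\varepsilon_1 + \cdots + \varepsilon_k \le 2\varepsilon_k$ is consumed. Then $\varepsilon_0, \varepsilon_1 \le \bar\varepsilon$ follows from $\varepsilon_k < \bar\varepsilon/2$, and the hypothesis $\varphi(2\Delta t; \lambda) + 5\bar\varepsilon \le \delta(a; \lambda)$ is exactly the trigger condition of \cref{t:fixed_interval}. Applying that lemma gives, for $t \in [t_k, t_{k+1}]$,
\begin{equation*}
|\gamma^\xi(t) - \gamma^\lambda(t)| \le a + \varepsilon_0 \, c_{t_{k-1}, t_{k+1}}^{-1} \le a + 2\varepsilon_{k-1} \, c_{t_{k-1}, t_{k+1}}^{-1}.
\end{equation*}
Finally, the condition $\varepsilon_{k-1} < a \, c_{t_{k-1}, t_{k+1}}$ forces $2\varepsilon_{k-1} c_{t_{k-1}, t_{k+1}}^{-1} < 2a$, so $|\gamma^\xi(t) - \gamma^\lambda(t)| \le 3a$ on each $[t_k, t_{k+1}]$, and taking the union over $k$ and the base interval yields the claim on all of $[0,1]$.

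I expect the main obstacle to be the careful propagation of the sup-norm bound on $\|\xi - \lambda\|_{[0, t_{k-1}]}$ from the increment bounds — i.e. verifying that the "centered" increment hypotheses genuinely sum up to control the uncentered difference, and checking that the arithmetic conditions relating $\varepsilon_k$, $\bar\varepsilon$, $a$, and $c_{t_k, t_{k+2}}$ are all simultaneously satisfiable and used consistently (in particular that the index shift between $c_{t_{k-1},t_{k+1}}$ appearing in the lemma application and $c_{t_k,t_{k+2}}$ as indexed in the hypotheses is handled correctly). The conformal-geometry content is entirely delegated to \cref{t:fixed_interval} and \cref{t:hull}; what remains is essentially a bookkeeping argument, so the proof should be short, but one must be meticulous that no constant is off by a factor that breaks the final $3a$ bound.
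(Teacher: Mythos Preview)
Your approach is essentially the same as the paper's: handle $[0,t_1]$ directly via \cref{t:hull}, then apply \cref{t:fixed_interval} on each triple of consecutive partition points, using the telescoping sum $\varepsilon_1+\cdots+\varepsilon_k \le 2\varepsilon_k$ to control $\|\xi-\lambda\|_{[0,t_{k-1}]}$. One small slip to fix: in \cref{t:fixed_interval} the quantity playing the role of $\varepsilon_1$ must bound the centred increment over the \emph{whole} interval $[t_0',t_2'] = [t_{k-1},t_{k+1}]$, not just $[t_{k-1},t_k]$; you need $\|(\xi-\xi(t_{k-1}))-(\lambda-\lambda(t_{k-1}))\|_{[t_{k-1},t_{k+1}]} \le \varepsilon_k+\varepsilon_{k+1} \le 2\varepsilon_{k+1} \le \bar\varepsilon$, which follows by the same telescoping you already use for $\varepsilon_0$.
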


\begin{proof}
	Let $t \in [0,1]$. 	
	In case $t \le t_{1}$, applying Lemma \ref{t:hull} in the same way as in the proof of \cref{t:fixed_interval} implies
	\begin{equation*}
	|\gamma^\xi(t)-\gamma^\lambda(t)| \le \varphi(\Delta t; \lambda) + \bar\varepsilon < 2a.
	\end{equation*}
	
	If $t \ge t_{1}$, we find $k \ge 0$ such that $t \in [t_{k+1},t_{k+2}]$. We  apply Lemma \ref{t:fixed_interval} with the time points $0 \le t_k < t_{k+1} < t_{k+2}$.
	
	Observe that $\|\xi-\lambda\|_{[0,t_k]} \le \varepsilon_1+...+\varepsilon_k \le 2\varepsilon_k \le \bar\varepsilon$ and $\|(\xi-\xi(t_k))-(\lambda-\lambda(t_k))\|_{[t_k,t_{k+2}]} \le \varepsilon_{k+1}+\varepsilon_{k+2} \le 2\varepsilon_{k+2} \le \bar\varepsilon$.
	
	Lemma \ref{t:fixed_interval} shows
	\begin{equation*}
	|\gamma^\xi(t)-\gamma^\lambda(t)| \le a + 2\varepsilon_k c_{t_k,t_{k+2}}^{-1} < 3a.
	\end{equation*}
\end{proof}

\begin{remark}
    The list of conditions for \cref{t:repeated_intervals} looks quite long. We describe roughly how we will find suitable variables such that the corollary can be applied. 
    
    Suppose that $\lambda$ and $a$ are given. We will pick $\bar\varepsilon$ and $\Delta t$ accordingly. Then, to choose $\varepsilon_1,...,\varepsilon_n$ and $\xi$, note that each $c_{t_k,t_{k+2}}$ depends only on the increments of $\xi$ on the interval $[t_k,t_{k+2}]$. Therefore we can choose $\varepsilon_k$ depending on the increments of $\xi$ on $[t_k,1]$, and afterwards choose the increments of $\xi$ on $[t_{k-1},t_k]$, then again choose $\varepsilon_{k-1}$, and so on.
\end{remark}


\section{Proof of the Support Theorem for $\kappa \le 4$}\label{s:k_le_4}

Let $\xi(t) = \sqrt\kappa B_t$ and let $\lambda\in \mathcal{D}$. Let $a > 0$ be given.

For simplicity, we first show Theorem \ref{t:main} for $\kappa \le 4$. In this case $\gamma^\xi$ is a simple trace, as well as $\gamma^\xi_{t_k}$ for all $t_k \ge 0$. In particular, it will never touch the real line after time $0$ and automatically guarantees the condition $c_{t_k,t_{k+2}} = \inf_{t \in [t_{k+1},t_{k+2}]} \Im \gamma^\xi_{t_k}(t) > 0$ of Corollary \ref{t:repeated_intervals}.

The remaining task is to find a set of positive probability where all conditions of Corollary \ref{t:repeated_intervals} are satisfied.

First choose $\Delta t > 0$ and $\bar\varepsilon < a$ such that $\varphi(\Delta t; \lambda) < a$ and $\varphi(2\Delta t; \lambda)+5\bar\varepsilon \le \delta(a; \lambda)$. Next, partition $[0,1]$ into sub-intervals $0=t_0<t_1<...<t_n=1$ such that $|t_k-t_{k-1}|\le\Delta t$ for all $k \ge 1$.

Suppose now that we have (arbitrary) random variables $\varepsilon_k \le \bar\varepsilon$ that are a.s. positive and measurable w.r.t. $\mathcal F_{t_k,1}$ (where $\mathcal F_{r,s}$ denotes the sigma algebra generated by Brownian increments between time $r$ and $s$). By inductively applying  the independence of Brownian increments, we claim that
\begin{equation*}
\P(\forall k: \|(\xi-\xi(t_{k-1}))-(\lambda-\lambda(t_{k-1}))\|_{\infty, [t_{k-1},t_k]} \le \varepsilon_k)>0.
\end{equation*}

To verify this claim, suppose that for some $1\leq k< n$ 
\begin{equation*}
\P(\forall l\geq k+1: \|(\xi-\xi(t_{l-1}))-(\lambda-\lambda(t_{l-1}))\|_{\infty, [t_{l-1},t_l]} \le \varepsilon_l)>0.
\end{equation*}

Then since $\varepsilon_k$ is a.s. positive,
\begin{equation*}
\P(\varepsilon_k \ge b \quad \text{and} \quad \forall l \geq k+1: \|(\xi-\xi(t_{l-1}))-(\lambda-\lambda(t_{l-1}))\|_{\infty, [t_{l-1},t_l]} \le \varepsilon_l)>0
\end{equation*}
 for $b>0$ small enough. Since the Brownian increments on $[t_{k-1}, t_k]$ are independent of $\mathcal F_{t_k,1}$, it follows that
\begin{align*}
&\P(\|(\xi-\xi(t_{k-1}))-(\lambda-\lambda(t_{k-1}))\|_{\infty, [t_{k-1},t_k]} \le b\\
&\qquad \mid \varepsilon_k \ge b \ \text{and} \ \forall l\geq k+1: \|(\xi-\xi(t_{l-1}))-(\lambda-\lambda(t_{l-1}))\|_{\infty, [t_{l-1},t_l]} \le \varepsilon_l)\\
&= \P(\|(\xi-\xi(t_{k-1}))-(\lambda-\lambda(t_{k-1}))\|_{\infty, [t_{k-1},t_k]} \le b)\\
&> 0
\end{align*}
and consequently
\begin{equation*}
\P(\forall l \ge k: \|(\xi-\xi(t_{l-1}))-(\lambda-\lambda(t_{l-1}))\|_{\infty, [t_{l-1},t_l]} \le \varepsilon_l) > 0,
\end{equation*}
which implies the claim.

Now, it remains to choose suitable $\varepsilon_k$. Since $\kappa \le 4$, the curve $\gamma^\xi_{t_k}$ a.s. does not hit the real line for all $k$. Hence, the random variable $c_{t_k,t_{k+2}}(\omega) \defeq \inf_{t \in [t_{k+1},t_{k+2}]} \Im \gamma^{\xi(\omega)}_{t_k}(t)$ is a.s. positive. It is also measurable w.r.t. $\mathcal F_{t_k,t_{k+2}}$.\footnote{Note that the solution of the Loewner ODE is measurable with respect to the driver since it can be seen as the limit of a Picard iteration.}

Then, inductively backward in $k$, choose
\begin{equation}\label{e:e_k}
    \varepsilon_k = \frac{\varepsilon_{k+1}}{2} \wedge a \, c_{t_k,t_{k+2}}.
\end{equation}

Finally, every
\begin{equation*}
\omega \in \{ \forall k: \|(\xi-\xi(t_{k-1}))-(\lambda-\lambda(t_{k-1}))\|_{\infty, [t_{k-1},t_k]} \le \varepsilon_k \}
\end{equation*}
satisfies the conditions of Corollary \ref{t:repeated_intervals}, and therefore,
\begin{equation*}
\|\gamma^{\xi(\omega)}(t)-\gamma^\lambda(t)\|_{\infty,[0,1]} \le 4a.
\end{equation*}

This finishes the proof of \cref{p:main} in the case $\kappa\in (0,4]$.


\section{Proof of the Support Theorem for general $\kappa$}\label{s:k_gen}

In case $\kappa > 4$, we can use the same proof as before, but the condition 
\begin{equation*}
    c_{t_0,t_{2}}(\omega) = \inf_{t \in [t_{1},t_{2}]} \Im \gamma^{\xi(\omega)}_{t_0}(t) > 0
\end{equation*}
might be violated. Hence, our main task here is to add some condition that guarantees $c_{t_0,t_{2}}(\omega) > 0$ for almost all $\omega$.

Our main idea is as follows. Let $0 \le t_0 < t_1 < t_2$. We write $\gamma^\xi_{t_0}(t) = \hat f^\xi_{t_0,t_1}(\gamma^\xi_{t_1}(t))$ where $f^\xi_{t_0,t_1}$ is a conformal map that (extended to the boundary) maps some real interval $I$ onto $\partial K^\xi_{t_0,t_1}$,
\footnote{We consider hulls $K$ as closed subsets of the space $\barH$, so $\partial K = \{z\in K \mid B(z,\delta)\cap (\H \setminus K) \neq \varnothing \text{ for all } \delta>0\}$.}
and $\mathbb{R}\setminus I$ into $\mathbb{R} \setminus \{0\}$. If $\gamma^\xi_{t_1}(t) \in \mathbb{H}$, then trivially $\gamma^\xi_{t_0}(t) \in \mathbb{H}$. Hence, we only need to focus on the case  $\gamma^\xi_{t_1}(t) \in \mathbb{R}$.
Recall that if $\|(\xi-\xi(t_1))-(\lambda-\lambda(t_1))\|_{[t_1,t_2]} < \bar\varepsilon$, Lemma \ref{t:hull} implies
\begin{equation*}
|\Re \gamma^\xi_{t_1}(t)| \le \osc(\Delta t; \lambda)+2\bar\varepsilon \eqdef \alpha
\end{equation*}
for $t \in [t_1,t_2]$. We will show that, under some conditions,
\begin{equation}\label{e:goal_general_kappa}
    \text{the real interval } [-\alpha, \alpha] \text{ is contained in the interior of } I.
\end{equation}
This implies $\gamma^\xi_{t_1}(t) \in I$ and consequently $\gamma^\xi_{t_0}(t) \in \partial K^\xi_{t_0,t_1}$. Moreover, by a property of SLE proven by D. Zhan (see below), $\partial K^\xi_{t_0,t_1}$ intersects $\mathbb{R}$ only at its endpoints. Since $\gamma^\xi_{t_1}(t)$ actually lies in the interior of $I$, then $\gamma^\xi_{t_0}(t)$  lies in the interior of $\partial K^\xi_{t_0,t_1}$ which is contained in $\mathbb{H}$.

Finally, note that \eqref{e:goal_general_kappa} is reasonable because by the assumption $\lambda\in \mathcal{D}$ and the freedom to choose $\bar{\varepsilon}$, we can estimate
\begin{equation*}
    \alpha \approx (\text{a small number} ) \cdot \sqrt{\Delta t},
\end{equation*}
where for $I$, at least in the extreme case $\xi\equiv 0$, it can be calculated that $I=[-2\sqrt{\Delta t}, 2\sqrt{\Delta t}].$

Now, we fill the above arguments with more rigorous details. First,  we analyze $f^\xi_{t_0,t_1}$ as the inverse map of the Loewner flow driven by $\xi(\cdot)-\xi(t_0)$, $t \in [t_0,t_1]$.

In order to do so, we analyze the time-reversed Loewner equation. Recall that  if $(g_t)$ is the Loewner flow driven by $\xi$, then for any $s_0 > 0$ we can write $\hat f_{s_0} = h_{s_0}+\xi(s_0)$ where $(h_t)$ is the solution of
\begin{align*}
\partial_t h_t(z) = - \frac{2}{h_t(z)-W(t)}, \quad h_0(z) = z,
\end{align*}
with $W(t) = \xi(s_0-t)-\xi(s_0)$. For all $z \in \mathbb{C} \setminus \{W(0)\}$, this ODE can be solved on $t \in [0, T(z)[$ where $T(z)$ is the first time when $h_t(z)-W(t)$ hits $0$, and $T(z) = \infty$ for all $z \notin \mathbb{R}$.

Suppose that $\hat{f}_{s_0}:\mathbb{H}\to \mathbb{H}\backslash K_{s_0}$ can be continuously extended to the boundary $\mathbb{R}$. (This holds when the driver is in $\mathcal{D}$, or a multiple of Brownian motion.) It is known that exists a closed interval $I$ such that
\begin{equation}\label{e:I}
I= \{x\in \mathbb{R}: T(x)\leq s_0\} = \{x\in \mathbb{R}: \hat{f}_{s_0}(x)\in K_{s_0}\}.
\end{equation}

Therefore we can analyze the interval $I$ just by the time-reversed Loewner equation.

J. Lind has shown in \cite[Corollary 1]{Lind05} that if $W$ has $1/2$-H\"older constant less than $4$, then $T(x)$ is comparable to $x^2$. A comparison argument will show that the result stays true if the driver is slightly modified. The next two results make it more precise.

\begin{lemma}
	Let $V^1, V^2 \in C([0,\infty),\mathbb{R})$ with $V^1(0)=V^2(0)=0$ and let $(h^j_t)_{t \ge 0}$ solve
	\begin{align*}
	\partial_t h^j_t(x) = - \frac{2}{h^j_t(x)-V^j(t)}, \quad h^j_0(x) = x
	\end{align*}
	for $x \in \mathbb{R} \setminus \{0\}$, $t \in [0,T^j(x))$ where $T^j(x)$ is the first time when $h^j_t(x)-V^j(t)$ hits $0$, $j=1,2$.
	
	Let $t>0$, $x>0$, and $\delta = \|V^1-V^2\|_{\infty, [0,t]}$. If $T^1(x+\delta) \le t$, then $T^2(x) \le t$.
\end{lemma}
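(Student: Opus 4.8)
The idea is a comparison principle for the time-reversed Loewner ODEs. Think of the flow driven by $V^1$ started from the point $x+\delta$ and the flow driven by $V^2$ started from $x$. I want to show that the second flow reaches $0$ (i.e. is absorbed) no later than the first. The natural approach is to track the difference $D_t \defeq h^1_t(x+\delta) - h^2_t(x)$ and show that, as long as both flows survive, $D_t$ stays in the interval $[-\delta,\delta]$, equivalently that $h^2_t(x) \in [h^1_t(x+\delta)-\delta, h^1_t(x+\delta)]$, in particular $h^2_t(x) \le h^1_t(x+\delta)$, while both stay positive (note $h^j$ started at a positive point stays positive until absorption, since the vector field $-2/(h-V)$ cannot push a point through $0$ from the right faster than the other point sitting at $0$ would be pushed). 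Since $T^1(x+\delta)\le t$ means $h^1_s(x+\delta)-V^1(s)\to 0$ as $s\uparrow T^1(x+\delta)$, and $|V^1-V^2|\le\delta$, once $D_s$ has stayed within $[-\delta,\delta]$ and the first flow is absorbed, the quantity $h^2_s(x)-V^2(s)$ must also have been driven to $0$ by that time, giving $T^2(x)\le T^1(x+\delta)\le t$.

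The key step is therefore the differential inequality for $D_t$. On any interval where both flows are defined and $h^j_t(x)-V^j(t)>0$, we compute
\begin{equation*}
\partial_t D_t = -\frac{2}{h^1_t(x+\delta)-V^1(t)} + \frac{2}{h^2_t(x)-V^2(t)}
= \frac{2\bigl[(h^1_t(x+\delta)-V^1(t)) - (h^2_t(x)-V^2(t))\bigr]}{(h^1_t(x+\delta)-V^1(t))(h^2_t(x)-V^2(t))}.
\end{equation*}
The numerator equals $2(D_t - (V^1(t)-V^2(t)))$, and the denominator is positive. Hence $\partial_t D_t$ has the same sign as $D_t - (V^1(t)-V^2(t))$. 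Now one runs a standard barrier argument: initially $D_0 = \delta \ge V^1(0)-V^2(0) = 0$ actually $D_0 = \delta$, and $|V^1(t)-V^2(t)|\le\delta$ for $t\le t$, so whenever $D_t$ tries to exceed $\delta$ from below we have $D_t \ge \delta \ge V^1(t)-V^2(t)$ with equality only in the boundary case, so $\partial_t D_t \le 0$ there — the level $\delta$ acts as an upper barrier; symmetrically, whenever $D_t$ drops to $-\delta$ we have $D_t = -\delta \le V^1(t)-V^2(t)$, so $\partial_t D_t \ge 0$ and $-\delta$ is a lower barrier. Thus $D_t\in[-\delta,\delta]$ for all $t$ where both flows are alive. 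In particular $h^2_t(x) \ge h^1_t(x+\delta) - \delta \ge 0$ as long as $h^1$ survives, so $T^2(x) \ge$ nothing bad — rather, the point is the reverse inequality below.

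To finish: suppose $T^1(x+\delta)\le t$. On $[0,T^1(x+\delta))$ both flows are alive (the second cannot have been absorbed earlier, since $h^2_s(x)-V^2(s) = (h^1_s(x+\delta)-V^1(s)) - D_s + (V^1(s)-V^2(s))$, and if this hit $0$ at some $s_*<T^1(x+\delta)$ then... one needs to rule this out, or simply observe it only helps, giving $T^2(x)\le s_* < t$). Assuming both survive up to $T^1(x+\delta)$, let $s\uparrow T^1(x+\delta)$: then $h^1_s(x+\delta)-V^1(s)\to 0$, and since $|D_s|\le\delta$ and $|V^1(s)-V^2(s)|\le\delta$ we get $\limsup_{s\uparrow T^1(x+\delta)} |h^2_s(x)-V^2(s)| \le 2\delta$. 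This does not immediately give absorption of the second flow. The cleaner route, which I would actually take, is to apply the barrier argument to the \emph{shifted} comparison $h^2_t(x)$ versus $h^1_t(x)$ directly is not available since they start at the same point; instead compare $h^2_t(x)$ with $h^1_t(x+\delta)$ but track $E_t \defeq (h^1_t(x+\delta)-V^1(t)) - (h^2_t(x)-V^2(t))$, for which $\partial_t E_t = -\frac{2}{h^1-V^1}+\frac{2}{h^2-V^2} - (\dot V^1-\dot V^2)$ is awkward if $V^j$ are not differentiable. So I keep the $D_t$ formulation: from $D_t\in[-\delta,\delta]$ and $D_0=\delta$, at the absorption time of the first flow we have $h^2(x)-V^2 = -D + (h^1(x+\delta)-V^1) + (V^1-V^2) \to (V^1-V^2)(T^1) - D(T^1)$, which lies in $[-2\delta,2\delta]$; to pin it to $0$ one uses that the second flow, having stayed positive and bounded above by $h^1(x+\delta)\to V^1(T^1(x+\delta))$, is squeezed — more carefully, $0 \le h^2_s(x)-V^2(s) \le (h^1_s(x+\delta)-V^1(s)) + 2\delta$ is not tight enough, so instead one should re-examine: actually $h^2_s(x) \le h^1_s(x+\delta)$ does hold once $D_s\ge 0$, and the barrier shows $D_s$ never goes below... hmm.

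\textbf{The main obstacle} I anticipate is exactly this endpoint argument — converting ``$D_t$ stays bounded'' plus ``first flow is absorbed at time $\le t$'' into ``second flow is absorbed at time $\le t$.'' The robust fix is to prove the stronger invariant $0 \le D_t \le \delta$ for all $t$ (not just $|D_t|\le\delta$): since $D_0=\delta$ and $\partial_t D_t \le 0$ when $D_t$ is near $\delta$, $D_t$ decreases; and when $D_t$ reaches $0$ we have $D_t = 0 \le \delta$ but also $D_t = 0 < V^1-V^2$ is not guaranteed. Alternatively, and most cleanly, apply the barrier argument to $\tilde D_t \defeq h^1_t(x+\delta) - h^2_t(x) - (V^1(t)-V^2(t))$ — no. I will instead argue directly with the quantity $q^j_t \defeq h^j_t - V^j$ (the ``distance to the singularity''): $q^1$ started at $x+\delta - V^1(0) = x+\delta$ and $q^2$ at $x$, and one shows $q^2_t \le q^1_{t} $ for all $t$ by a comparison of the integral equations $q^j_t = q^j_0 - \int_0^t \frac{2}{q^j_s}\,ds - (V^j(t)-V^j(0))$, using $|V^1-V^2|\le\delta$ and $q^1_0 - q^2_0 = \delta$: if $q^2$ ever catches up to $q^1$ at some first time $\tau$, then at $\tau$, $q^2_\tau = q^1_\tau$ but $\int_0^\tau \frac{2}{q^2_s}ds \ge \int_0^\tau \frac{2}{q^1_s}ds$ (since $q^2_s\le q^1_s$ on $[0,\tau]$) forces $q^1_0 - q^2_0 \le V^1(\tau) - V^2(\tau) - (V^1(0)-V^2(0)) \le 2\delta$... which is consistent, not contradictory. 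So even this needs the sharper bound $\delta$ vs $2\delta$. The resolution is that $|V^1(\tau)-V^2(\tau)| \le \delta$ (not $2\delta$) because $V^j(0)=0$, so $q^1_0 - q^2_0 = \delta \le V^1(\tau)-V^2(\tau) \le \delta$ forces equality throughout and $q^1_s = q^2_s$ on all of $[0,\tau]$ with $V^1 \equiv V^2 + \delta$ there — a degenerate case one handles separately (if $V^1 = V^2 + \delta$ on $[0,\tau]$ then $h^1_s(x+\delta) = h^2_s(x)+\delta$ there and the statement is immediate). Thus $q^2_t \le q^1_t$ for all $t < T^1(x+\delta)$, hence $q^2$ reaches $0$ no later than $q^1$, i.e. $T^2(x) \le T^1(x+\delta) \le t$. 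This comparison-of-ODEs argument, with the degenerate-equality case peeled off, is what I would write up.
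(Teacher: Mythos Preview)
Your final argument --- comparing $q^j_t \defeq h^j_t - V^j(t)$ and showing $q^2_t < q^1_t$ as long as both flows survive --- is correct and is exactly the paper's approach. The paper phrases it as a ``no first violation time'' argument for the derivative inequality $\partial_s h^2_s(x) < \partial_s h^1_s(x+\delta)$: if the inequality holds on $[0,s_0)$, integrating and using $|V^1-V^2|\le\delta$ gives $h^2_{s_0}(x)-V^2(s_0) < h^1_{s_0}(x+\delta)-V^1(s_0)$, which is precisely your $q^2_{s_0}<q^1_{s_0}$, and this in turn gives the derivative inequality at $s_0$. From $q^2_s < q^1_s$ and $q^1_s \to 0$ as $s\uparrow T^1(x+\delta)$, the conclusion $T^2(x)\le T^1(x+\delta)$ is immediate.

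Two comments on the execution. First, your ``degenerate case'' is not a case at all: the equality you derive, $q^1_s = q^2_s$ for all $s\in[0,\tau]$, contradicts $q^1_0 = x+\delta \neq x = q^2_0$ outright (when $\delta>0$; the case $\delta=0$ is trivial). So the first catch-up time $\tau$ simply does not exist, and you are done --- no separate handling is needed. Second, the long exploratory detour through $D_t$, then $E_t$, then barriers at $\pm\delta$, is unnecessary and should be cut. The paper's proof is five lines because it goes straight to the comparison of $q^1$ and $q^2$ (equivalently, of the derivatives of $h^1$ and $h^2$); you eventually land there too, so just start there.
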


\begin{proof}
	Assume without loss of generality that $T^1(x+\delta) = t$, i.e. $h^1_s(x+\delta)$ exists for all $s<t$ and only dies at time $t$.
	
	We claim that for all $s<t$, $\partial_s h^2_s(x) < \partial_s h^1_s(x+\delta)$. (Note that this means $|\partial_s h^2_s(x)| > |\partial_s h^1_s(x+\delta)|$ since both are negative.)
	
	At $s=0$ this is obviously true since
	\begin{align*}
	- \frac{2}{h^2_0(x)-V^2(0)} = - \frac{2}{x} < - \frac{2}{x+\delta} = - \frac{2}{h^1_0(x+\delta)-V^1(0)}.
	\end{align*}
	
	Now if the claim holds for all $s<s_0$, then
	\begin{align*}
	h^2_{s_0}(x) - x = \int_0^{s_0} \partial_s h^2_s(x) \, ds < \int_0^{s_0} \partial_s h^1_s(x+\delta) \, ds = h^1_{s_0}(x+\delta) - (x+\delta).
	\end{align*}
	Consequently,
	\begin{align*}
	h^2_{s_0}(x)-V^2(s_0) \le h^2_{s_0}(x)-V^1(s_0)+\delta < h^1_{s_0}(x+\delta)-V^1(s_0),
	\end{align*}
	i.e.
	\begin{align*}
	- \frac{2}{h^2_{s_0}(x)-V^2(s_0)} < - \frac{2}{h^1_{s_0}(x+\delta)-V^1(s_0)}.
	\end{align*}
	This shows that there cannot be a first time $s_0$ where the claim is violated. By the continuity of $V^j$ and $h^j_t(x)$, and therefore also $\partial_t h^j_t(x)$ in $t$, the claim is never violated at any time.
	
	To finish the proof of the lemma, note that we have also shown above that
	\begin{align*}
	h^2_s(x)-V^2(s) < h^1_s(x+\delta)-V^1(s)
	\end{align*}
	for all $s \in [0,t]$. If $T^1(x+\delta) \le t$, this means that $h^1_s(x+\delta)-V^1(s) = 0$ for some $s \le t$, and consequently $h^2_s(x)-V^2(s) = 0$ for some smaller $s < t$.
\end{proof}

\begin{corollary}\label{t:interval_eaten}
	Let $V \in C^{1/2}([0,T],\mathbb{R})$ with $\|V\|_{1/2} < 4$ and $V(0)=0$. Then there exists some constant $c > 0$, depending on $\|V\|_{1/2}$, such that if $W \in C[0,T]$, $W(0)=0$, $\|W-V\|_\infty \le c \sqrt t$, and $|x| \le c \sqrt t$, then $T^W(x) \le t$. 
\end{corollary}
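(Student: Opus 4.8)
The plan is to combine Lind's result \cite[Corollary 1]{Lind05} with the comparison lemma proved just above. Lind's corollary states that if a driving function $V$ has $1/2$-Hölder constant strictly less than $4$, then for the time-reversed flow driven by $V$ there is a constant $c_1 > 0$, depending only on $\|V\|_{1/2}$, such that $T^V(x) \le t$ whenever $|x| \le c_1 \sqrt{t}$. (More precisely, Lind shows $T^V(x) \asymp x^2$ with constants depending on $\|V\|_{1/2}$; the direction we need is that a point within distance $c_1\sqrt t$ of the origin gets absorbed by time $t$.) This handles the base function $V$; the task is to transfer it to the perturbed driver $W$.

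First I would apply the comparison lemma with $V^1 = V$ and $V^2 = W$. Set $\delta = \|W - V\|_{\infty,[0,t]}$, which by hypothesis is at most $c_2 \sqrt t$ for a constant $c_2$ still to be fixed. Given $x > 0$ with $x \le c\sqrt t$ (for $c$ to be chosen), the lemma says: if $T^V(x + \delta) \le t$, then $T^W(x) \le t$. So it suffices to ensure $x + \delta \le c_1 \sqrt t$, since then Lind's result gives $T^V(x+\delta) \le t$. We have $x + \delta \le c\sqrt t + c_2 \sqrt t = (c + c_2)\sqrt t$, so it is enough to choose $c$ and $c_2$ with $c + c_2 \le c_1$; for instance $c = c_2 = c_1/2$. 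Then set the constant in the statement of the corollary to be this $c$. The case $x < 0$ is symmetric (reflect everything across the imaginary axis, or run the same argument with $h^j$ replaced by $-h^j(-x)$), and $x = 0$ is immediate since $T^W(0) = 0$ by definition of the first hitting time of $0$ by $h^W_s(0) - W(s) = -W(s)$... actually $x=0$ is excluded by the $x\in\mathbb R\setminus\{0\}$ convention, or handled trivially by continuity.

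The one point requiring slight care — and the closest thing to an obstacle — is the interface between the two cited statements: Lind's result is phrased in terms of the half-plane capacity / hull generated by $V$, and one must make sure that "$T^V(x) \le t$ for $|x| \le c_1\sqrt t$" is genuinely what it provides for the time-reversed equation in the form written here. Since the time-reversed Loewner equation driven by a $1/2$-Hölder function with constant $< 4$ generates the same kind of hulls (by the time-reversal symmetry of the Loewner equation and \cite{Lind05, LMR}), and the absorption time $T^V(x)$ is exactly the capacity time at which the point $x$ is swallowed, this is a direct translation. Everything else is bookkeeping with the two constants $c_1$ (from Lind) and the comparison lemma, and the final $c$ is simply $c_1/2$ (or any value $\le c_1/2$), depending on $\|V\|_{1/2}$ as required.
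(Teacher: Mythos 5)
Your proposal is correct and follows essentially the same route as the paper: invoke Lind's \cite[Corollary 1]{Lind05} to get a constant $c'$ with $T^V(x) \le t$ for $x \le c'\sqrt t$, then set $c = c'/2$ so that $x + \|W-V\|_\infty \le c'\sqrt t$ and apply the comparison lemma, with the negative $x$ case handled by symmetry. No further comment is needed.
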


\begin{proof}
	By symmetry, it suffices to consider $x>0$. 
	By \cite[Corollary 1]{Lind05}, there exists a constant $c'$ depending on $\|V\|_{1/2}$ such that if $x \le c' \sqrt t$, then $T^V(x) \le t$.
	
	Let $c \defeq c'/2$. If $x \le c \sqrt t$, then $x+\|W-V\|_\infty \le c' \sqrt t$, so $T^V(x+\|W-V\|_\infty) \le t$. The previous lemma implies $T^W(x) \le t$.
\end{proof}

Now, we apply this corollary to our context of $\lambda\in \mathcal{D}$ and $\xi$ that is close to $\lambda$. 

\begin{corollary}\label{t:gamma_in_H}
	Let $\lambda\in \mathcal{D}$. Then for sufficiently small $\Delta t > 0$ (depending on $\lambda$) and $\bar\varepsilon > 0$ (depending on $\lambda$ and $\Delta t$) the following holds:
	
	Let $\xi \in C[0,1]$ with $\xi(0)=0$. Suppose that $\xi$ generates a Loewner trace $\gamma^\xi$, and $\xi(t)-\xi(\Delta t)$, $t \in [\Delta t, 2\Delta t]$, generates a trace $\gamma^\xi_{\Delta t}$, and $\|\xi-\lambda\|_{[0,\Delta t]} \le \bar\varepsilon$, $\|(\xi-\xi(\Delta t))-(\lambda-\lambda(\Delta t))\|_{[\Delta t, 2\Delta t]} \le \bar\varepsilon$.
	
	Then if $\gamma^\xi_{\Delta t}(t) \in \mathbb{R}$ for some $t \in [\Delta t, 2\Delta t]$, then $\gamma^\xi(t) \in K^\xi_{\Delta t}$.
	
	If additionally $\partial K^\xi_{\Delta t}$ intersects $\mathbb{R}$ only at its endpoints, then $\Im \gamma^\xi(t) > 0$.
\end{corollary}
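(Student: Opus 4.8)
The plan is to reduce everything to a statement about the time-reversed Loewner equation, so that Corollary~\ref{t:interval_eaten} can be applied. Recall that by the discussion around \eqref{e:I}, writing $\hat f^\xi_{\Delta t} = h_{\Delta t} + \xi(\Delta t)$ with $(h_t)$ solving the time-reversed Loewner equation with driver $W(t) = \xi(\Delta t - t) - \xi(\Delta t)$ on $[0,\Delta t]$, there is a closed interval $I \subset \mathbb{R}$ with $I = \{x \in \mathbb{R} : T^W(x) \le \Delta t\} = \{x \in \mathbb{R} : \hat f^\xi_{\Delta t}(x) \in K^\xi_{\Delta t}\}$. So to show $\gamma^\xi(t) = \hat f^\xi_{\Delta t}(\gamma^\xi_{\Delta t}(t))$ (using $\gamma^\xi_{\Delta t}(t) \in \mathbb{R}$) lies in $K^\xi_{\Delta t}$, it suffices to show $\gamma^\xi_{\Delta t}(t) \in I$, i.e.\ $T^W(\gamma^\xi_{\Delta t}(t)) \le \Delta t$.

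First I would fix the scale. Since $\lambda \in \mathcal{D}$, its local $1/2$-H\"older norm is as small as we like, so we can choose $\Delta t$ small enough that $\|\lambda - \lambda(0)\|_{C^{1/2}[0,\Delta t]}$, and hence (after recentering and time-reversing) the $1/2$-H\"older norm of $t \mapsto \lambda(\Delta t - t) - \lambda(\Delta t)$ on $[0,\Delta t]$, is less than, say, $2$. Let $c>0$ be the constant from Corollary~\ref{t:interval_eaten} applied with $T = \Delta t$ and this reversed $\lambda$ playing the role of $V$. Now I would choose $\bar\varepsilon > 0$ small enough that $2\bar\varepsilon \le c\sqrt{\Delta t}$ and also $\osc(\Delta t;\lambda) + 2\bar\varepsilon \le c\sqrt{\Delta t}$ — this is possible because $\osc(\Delta t;\lambda) \to 0$ and in fact $\osc(\Delta t;\lambda) \le \mathrm{(small)} \cdot \sqrt{\Delta t}$ for $\Delta t$ small. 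With these choices, set $W(t) = \xi(\Delta t - t) - \xi(\Delta t)$ and $V(t) = \lambda(\Delta t - t) - \lambda(\Delta t)$. The hypothesis $\|\xi - \lambda\|_{[0,\Delta t]} \le \bar\varepsilon$ gives $\|W - V\|_{\infty,[0,\Delta t]} \le 2\bar\varepsilon \le c\sqrt{\Delta t}$. On the other hand, the hypothesis $\|(\xi - \xi(\Delta t)) - (\lambda - \lambda(\Delta t))\|_{[\Delta t, 2\Delta t]} \le \bar\varepsilon$ together with Lemma~\ref{t:hull} applied to the chain driven by $\xi(\cdot) - \xi(\Delta t)$ on $[\Delta t, 2\Delta t]$ (whose driving function has sup-norm at most $\osc(\Delta t;\lambda) + 2\bar\varepsilon$, exactly as in the computation preceding \eqref{e:goal_general_kappa}) yields $|\Re \gamma^\xi_{\Delta t}(t)| \le \osc(\Delta t;\lambda) + 2\bar\varepsilon \le c\sqrt{\Delta t}$. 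Writing $x = |\gamma^\xi_{\Delta t}(t)|$ (so $x \le c\sqrt{\Delta t}$ since by assumption $\gamma^\xi_{\Delta t}(t) \in \mathbb{R}$), Corollary~\ref{t:interval_eaten} gives $T^W(x) \le \Delta t$, hence $\gamma^\xi_{\Delta t}(t) \in I$ and therefore $\gamma^\xi(t) = \hat f^\xi_{\Delta t}(\gamma^\xi_{\Delta t}(t)) \in K^\xi_{\Delta t}$, proving the first claim.

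For the second claim, assume additionally that $\partial K^\xi_{\Delta t}$ meets $\mathbb{R}$ only at its two endpoints. The point is that we have actually shown something slightly stronger: by construction $x \le c\sqrt{\Delta t} = \tfrac12 c'\sqrt{\Delta t}$ where $c'$ is Lind's constant, and Corollary~\ref{t:interval_eaten}/the lemma preceding it show $T^W(x) < T^W$ evaluated at the endpoint of $I$ — more carefully, since $x$ is strictly smaller than the threshold $c'\sqrt{\Delta t}$ that still forces $T^V \le \Delta t$, and strict monotonicity of $T^W$ in $x$ (which follows from the fact that reverse flow lines starting at distinct real points stay ordered and die in order, as in the proof of the lemma), the point $\gamma^\xi_{\Delta t}(t)$ lies in the \emph{interior} of $I$. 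Equivalently, $\hat f^\xi_{\Delta t}(\gamma^\xi_{\Delta t}(t))$ lies in $\partial K^\xi_{\Delta t}$ but is not one of its two boundary endpoints on $\mathbb{R}$; since $\partial K^\xi_{\Delta t}$ meets $\mathbb{R}$ only at those endpoints, we conclude $\gamma^\xi(t) \in \partial K^\xi_{\Delta t} \setminus \mathbb{R} \subset \mathbb{H}$, i.e.\ $\Im \gamma^\xi(t) > 0$.

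The main obstacle I anticipate is the ``interior'' upgrade in the last paragraph: Corollary~\ref{t:interval_eaten} as stated only gives $T^W(x) \le \Delta t$, i.e.\ membership in the closed interval $I$, whereas for the second claim we genuinely need $\gamma^\xi_{\Delta t}(t)$ to avoid the endpoints of $I$. This requires either a slightly quantitative version of Lind's estimate (the threshold at which a real point is swallowed \emph{by time strictly less than} $\Delta t$, versus exactly at time $\Delta t$) or a separate monotonicity/strictness argument for $T^W$; the bookkeeping is to make sure the constant $c$ is chosen strictly below the relevant threshold so that there is room to spare. A secondary, more routine point is verifying that $\hat f^\xi_{\Delta t}$ does extend continuously to $\mathbb{R}$ in this generality — but this is covered by the parenthetical remark after \eqref{e:I}, since here $\xi$ is only assumed to generate a trace (and in the intended applications is a multiple of Brownian motion or lies in $\mathcal{D}$), so the extension is available and $I$ is well-defined.
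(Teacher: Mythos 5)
Your proposal is correct and follows essentially the same route as the paper: reduce to the time-reversed flow via \eqref{e:I}, bound $|\Re\gamma^\xi_{\Delta t}(t)|$ by $\osc(\Delta t;\lambda)+2\bar\varepsilon \le c\sqrt{\Delta t}$ using Lemma \ref{t:hull}, apply Corollary \ref{t:interval_eaten} to place $\gamma^\xi_{\Delta t}(t)$ in $I$, and then observe that the comparison has room to spare (since $c=c'/2$ and $\|W-V\|_\infty\le 2\bar\varepsilon$) so that $[-c\sqrt{\Delta t},c\sqrt{\Delta t}]$ lies strictly inside $I$ and its image avoids the endpoints of $\partial K^\xi_{\Delta t}$. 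The ``interior upgrade'' you flag as the main obstacle is handled in the paper exactly by this room-to-spare observation, so no additional strict-monotonicity argument is needed.
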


In other words, the corollary states that $\inf_{t\in [\Delta t, 2\Delta t]} \Im(\gamma^\xi(t))>0$ when $\xi$ is close enough to $\lambda$ in a quantitative way.

\begin{proof}
	Since the constant $c>0$ in \cref{t:interval_eaten} depends only on $\|V\|_{1/2}$, we can fix $c>0$ corresponding to, say, $\|V\|_{1/2}=3$. Let $\Delta t$ be small enough such that on the interval $[\Delta t, 2\Delta t]$, the $1/2$-H\"older constant of $\lambda$ is less than $c/2$, and let $\bar\varepsilon \le \frac{c}{4}\sqrt{\Delta t}$. By Lemma \ref{t:hull},
	\begin{align*}
	|\Re \gamma^\xi_{\Delta t}(t)| \le \frac{c}{2} \sqrt{\Delta t} +2\bar\varepsilon \le c \sqrt{\Delta t}
	\end{align*}
	for $t \in [\Delta t, 2\Delta t]$.
	
	If now $\gamma^\xi_{\Delta t}(t) \in \mathbb{R}$, then \cref{t:interval_eaten}, applied to $V(s) = \lambda(\Delta t - s)-\lambda(\Delta t)$ and $W(s) = \xi(\Delta t - s)-\xi(\Delta t)$, and (\ref{e:I}) imply that
\begin{equation*}
	\gamma^\xi(t) = \hat f^\xi_{\Delta t}(\gamma^\xi_{\Delta t}(t)) \in \partial K^\xi_{\Delta t}.
\end{equation*}

Now suppose that $\partial K^\xi_{\Delta t}$ intersects $\mathbb{R}$ only at its endpoints. Then $\gamma^\xi(t) \in \mathbb{H}$ as long as it is not one of the endpoints.

In the above argument, $c$ and $\Delta t$ can be chosen such that the set $\{x \in \mathbb{R} \mid \hat f^\xi_{\Delta t}(x) \in K^\xi_{\Delta t}\}$ contains more than the interval $[-c\sqrt{\Delta t}, c\sqrt{\Delta t}]$. Then this interval gets mapped to an inner segment of $\partial K^\xi_{\Delta t}$, and not to its endpoints. In particular, $\gamma^\xi(t)$ is in the interior of $\partial K^\xi_{\Delta t}$.
\end{proof}

We remark that the assumption on $\xi$ holds almost surely if $\xi$ is a multiple of Brownian motion.

\begin{lemma}[{\cite[Theorem 6.1]{Zhan}}]\label{t:cut_points}
	Let $\kappa > 4$, and $(K_t)$ the hulls of \SLEk{}. For any $t>0$, almost surely $\partial K_t$ intersects $\mathbb{R}$ only at its endpoints.
\end{lemma}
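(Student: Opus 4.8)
The plan is as follows; the result is Zhan's \cite[Theorem 6.1]{Zhan}, but one can reconstruct it along these lines. First I would transfer everything to the reverse Loewner flow. Writing $\xi=\sqrt\kappa B$ for the driver and using the time-reversal recalled above, $\hat f_t = h_t + \xi(t)$ where $(h_s)_{s\in[0,t]}$ solves the reverse Loewner equation with driver $W(s)=\xi(t-s)-\xi(t)$; hence $h_t$ maps $\H$ conformally onto $\H\setminus(K_t-\xi(t))$, so $\tilde K_t\defeq\H\setminus h_t(\H)=K_t-\xi(t)$ is the reverse hull and $\partial K_t = \partial\tilde K_t+\xi(t)$. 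Since $W$ is, up to the factor $\sqrt\kappa$, a Brownian motion on $[0,t]$, and ``meeting $\mathbb{R}$ only at the endpoints'' is translation invariant, it suffices to show that $\partial\tilde K_t$ meets $\mathbb{R}$ only at the two endpoints of $\tilde K_t\cap\mathbb{R}$, almost surely.

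Next I would identify $\tilde K_t\cap\mathbb{R}$. For $x\in\mathbb{R}\setminus\{0\}$ let $T(x)$ be the first time $h_s(x)-W(s)$ hits $0$. For a fixed continuous driver the real flow lines $s\mapsto h_s(x)$ solve an ODE whose right-hand side is locally Lipschitz in the space variable, so two of them never cross and $h_s(x)-W(s)$ is increasing in $x$; consequently $\{x>0:T(x)\le t\}$ is an interval $(0,\tilde x_+]$, and symmetrically on the negative side, so by \eqref{e:I} we get $\tilde K_t\cap\mathbb{R}=[\tilde x_-,\tilde x_+]$, an interval whose endpoints manifestly lie on $\partial\tilde K_t$. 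The remaining, substantive, task is to exclude an interior point $y\in(\tilde x_-,\tilde x_+)$ lying on $\partial\tilde K_t$ --- equivalently, a point where $\H\setminus\tilde K_t$ ``pinches'' down to the real line, equivalently a point of $(\tilde x_-,\tilde x_+)$ that is the image under the boundary extension of $h_t$ of an interior prime end.

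This last exclusion is where the randomness enters, and it is the main obstacle. Two remarks make it plausible: (i) $\Im h_s(z)$ is nondecreasing along the reverse flow, since $\partial_s\Im h_s(z)=2\,\Im h_s(z)\,|h_s(z)-W(s)|^{-2}\ge 0$, so a pinch at an interior point would force a genuine degeneration of the conformal boundary values; (ii) for a fixed real point $y_0$, I would show $\P\bigl(y_0\in\partial\tilde K_t \text{ and } \tilde x_-<y_0<\tilde x_+\bigr)=0$ --- a fixed real point, once swallowed, a.s.\ lies in the interior of the hull rather than on its boundary --- by controlling $|h_t'(y_0+i\varepsilon)|$ and $\Im h_t(y_0+i\varepsilon)$ uniformly in $\varepsilon\downarrow 0$ via the explicit law of the reverse flow (a diffusion in suitable coordinates). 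To pass from ``fixed point'' to ``some point'', note that if $y\in(\tilde x_-,\tilde x_+)$ lies on $\partial\tilde K_t$ then, by continuity of the hull process and because $y$ stays strictly inside the swallowed interval in a left neighbourhood of $t$, the same holds for $\tilde K_q$ for every rational $q$ in that neighbourhood; so a countable union over $q\in\mathbb{Q}\cap(0,t)$, combined with a Fubini argument upgrading (ii) to ``for fixed $q$ the set of interior real boundary points of $\tilde K_q$ is a.s.\ empty'', finishes the proof. I expect the hard part to be precisely the quantitative non-degeneracy in (ii), i.e.\ the a.s.\ boundary regularity of the fixed-time hull along $\mathbb{R}$. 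For $4<\kappa<8$ one can alternatively sidestep this via SLE duality: $\partial K_t$ is locally an $\mathrm{SLE}_{16/\kappa}$-type curve and $16/\kappa<4$, so it a.s.\ meets $\mathbb{R}$ only at its endpoints, with the space-filling regime $\kappa\ge 8$ (where $16/\kappa\le 2$) being analogous.
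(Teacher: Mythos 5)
The paper offers no proof of this lemma: it is quoted verbatim from Zhan \cite[Theorem 6.1]{Zhan}, so the only fair comparison is with the structure of Zhan's argument, which goes through SLE duality (the description of $\partial K_t$ as an SLE$_{16/\kappa}(\rho)$-type curve). Your outline gets the reductions right --- passing to the reverse flow, identifying $\tilde K_t\cap\mathbb{R}$ as an interval via monotonicity of the real flow lines, and isolating the real content as the exclusion of interior swallowed points from $\partial\tilde K_t$ --- but the route you propose for that exclusion has a genuine gap.

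The gap is the passage from your fixed-point statement (ii) to the conclusion that the set of interior real boundary points is a.s.\ \emph{empty}. A Fubini argument applied to ``for each fixed $y_0$, a.s.\ $y_0\notin\partial\tilde K_t$'' yields only that, almost surely, the random set $\{y\in(\tilde x_-,\tilde x_+): y\in\partial\tilde K_t\}$ has Lebesgue measure zero; it cannot yield emptiness. This distinction is not pedantic here: for $\kappa>4$ the SLE trace itself a.s.\ hits $\mathbb{R}$ in an uncountable set of positive Hausdorff dimension, even though each fixed real point is a.s.\ not hit, so exactly the phenomenon you need to rule out for $\partial K_t$ does occur for a closely related random set. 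Ruling it out for $\partial K_t$ is precisely the substance of Zhan's theorem, and his proof does not proceed via a fixed-point-plus-Fubini scheme; it identifies $\partial K_t$ with a dual SLE curve and reads off its boundary behaviour from the $\rho$-values of that process. Your closing remark about duality is therefore not an optional shortcut for $4<\kappa<8$ but essentially the only known argument (and even there, ``$16/\kappa<4$ so it meets $\mathbb{R}$ only at its endpoints'' needs the force points of the dual SLE$_{16/\kappa}(\rho)$ to be tracked, since boundary avoidance for such processes depends on $\rho$, not just on $16/\kappa$). As written, your main line of proof does not close, and the aside that would close it is not developed.
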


Now, we can prove Proposition \ref{p:main}.
\begin{proof}[Proof of Proposition \ref{p:main}]
	The case $\kappa \le 4$ has already been shown in Section \ref{s:k_le_4}. The proof for $\kappa > 4$ is almost identical.
	
	Let $a>0$ be given. Choose $\Delta t > 0$ and $\bar\varepsilon < a$ such that $\varphi(\Delta t; \lambda) < a$ and $\varphi(2\Delta t; \lambda) + 5\bar\varepsilon \le \delta(a; \lambda)$. This time we additionally require $\Delta t$ and $\bar\varepsilon$ to satisfy the condition of Corollary \ref{t:gamma_in_H}.
	
	Then we partition the interval $[0,1]$ into $0=t_0 < t_1 < ...$ such that $|t_k-t_{k-1}| = \Delta t$, $k \ge 1$.
	
	The random variables $\varepsilon_k \le \bar\varepsilon$ are chosen as in (\ref{e:e_k}). \cref{t:gamma_in_H} together with \cref{t:cut_points}, applied to $\xi-\xi(t_k)$, $t \in [t_k, t_{k+2}]$, imply that a.s.
	\begin{equation*}
	    c_{t_k,t_{k+2}} = \inf_{t \in [t_{k+1},t_{k+2}]} \Im \gamma^\xi_{t_k}(t) > 0
	\end{equation*}
	for all $k \ge 0$. Hence, $\varepsilon_k$ are a.s. positive. The argument in Section \ref{s:k_le_4} shows that the event
	\begin{equation*}
	\forall k: \|(\xi-\xi(t_{k-1}))-(\lambda-\lambda(t_{k-1}))\|_{\infty, [t_{k-1},t_k]} \le \varepsilon_k
	\end{equation*}
	has positive probability.
	 
	For each $\omega$ in this event,  Corollary \ref{t:repeated_intervals} implies that $\|\gamma^{\xi(\omega)}-\gamma^\lambda\|_{\infty,[0,1]}\leq 4a$.  This finishes the proof.
\end{proof}


\begin{section}{Further characterisations of the support and open questions}
\label{s:other_variants}

We note that the set 
\begin{equation*}
    \mathcal{S} \defeq \overline{\{\gamma^\lambda:\lambda\in \mathcal{D}\}} \quad \subseteq C([0,1];\barH)
\end{equation*}
is a deterministic set and does not depend on $\kappa$. One may ask for what specific $\lambda$ (besides $\lambda \in \cD$) we have $\gamma^\lambda\in \mathcal{S}$?

First, it is worth mentioning that all curves in $\mathcal{S}$ are indeed Loewner curves, i.e. they satisfy the local growth property (which is not obvious since the closure is taken in the space $C([0,1];\overline{\mathbb{H}}$). We show this below in \cref{thm:limits_of_simple_curves}.

First recall that the half-plane capacity enjoys a uniform continuity property, described in \cite[Lemma 4.4]{Kem}. We will apply it in the following way.
\begin{lemma}\label{thm:hcap_continuity}
    For any $R > 0$ and $\varepsilon > 0$ there exists $\delta > 0$ such that if $K_1, K_2$ are compact $\mathbb{H}$-hulls with radius less than $R$ such that $K_1 \subseteq \fill(\overline{K_2^\delta})$ and vice versa, then $|\hcap(K_1)-\hcap(K_2)| < \varepsilon$.
\end{lemma}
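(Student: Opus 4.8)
The plan is to obtain \cref{thm:hcap_continuity} as a reformulation of Kemppainen's uniform continuity of half-plane capacity, \cite[Lemma 4.4]{Kem}, after a short reduction using only two elementary properties of $\hcap$: monotonicity under inclusion ($K \subseteq K' \implies \hcap(K) \le \hcap(K')$) and invariance under filling ($\hcap(\fill(K)) = \hcap(K)$, since $K$ and $\fill(K)$ share the same unbounded complementary component, which is all that enters the definition of $\hcap$).

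First I would reduce the two-sided bound to a one-sided estimate. Fix $R,\varepsilon>0$, and suppose we can find $\delta\in(0,R)$ such that
\begin{equation*}
    \hcap(\fill(\overline{K^\delta})) \le \hcap(K) + \tfrac{\varepsilon}{2}
\end{equation*}
for every compact $\H$-hull $K$ of radius less than $2R$. Let $K_1,K_2$ be compact $\H$-hulls of radius less than $R$ with $K_1\subseteq\fill(\overline{K_2^\delta})$ and $K_2\subseteq\fill(\overline{K_1^\delta})$; since $\delta<R$, the hulls $\fill(\overline{K_i^\delta})$ have radius less than $2R$. Then monotonicity together with invariance under filling gives
\begin{equation*}
    \hcap(K_1) \le \hcap(\fill(\overline{K_2^\delta})) \le \hcap(K_2) + \tfrac{\varepsilon}{2},
\end{equation*}
and symmetrically $\hcap(K_2)\le\hcap(K_1)+\tfrac{\varepsilon}{2}$, whence $|\hcap(K_1)-\hcap(K_2)|\le\tfrac{\varepsilon}{2}<\varepsilon$.

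It therefore remains to prove the one-sided, uniform estimate: fattening a hull of radius less than $2R$ by $\delta$ increases its half-plane capacity by at most $\varepsilon/2$, with $\delta$ depending only on $R$ and $\varepsilon$. This is exactly the content of \cite[Lemma 4.4]{Kem}, where $\hcap$ is shown to possess a modulus of continuity on the family of $\H$-hulls of radius at most $2R$ with respect to a notion of closeness in which $K$ and $\fill(\overline{K^\delta})$ lie at distance $\le\delta$ (each is contained in a closed $\delta$-neighbourhood of the other, trivially here, as $K\subseteq\fill(\overline{K^\delta})\subseteq\overline{K^\delta}$); one simply checks that $K_1,K_2$ satisfy the hypotheses of that lemma. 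Should a self-contained argument be preferred, one can instead use the Brownian representation $\hcap(K)=\lim_{y\to\infty}y\,\mathbb{E}^{iy}[\Im B_\tau]$, with $\tau$ the first exit time of $\H\setminus K$ by a planar Brownian motion: inserting the thin shell $\overline{K^\delta}\setminus K$ alters $\Im B_\tau$ by at most $\delta$ on the event that $B$ is absorbed there, while a tail bound uniform over hulls of radius $\le 2R$ controls the contribution of large $y$.

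The only genuine difficulty is the uniformity: continuity of $\hcap$ along a single Carath\'eodory-convergent sequence of hulls is classical and easy, but here $K_1$ and $K_2$ range over the non-compact family of all $\H$-hulls of radius below $R$, so one needs a modulus of continuity valid for all of them simultaneously. That is precisely what \cite[Lemma 4.4]{Kem} provides, so the work left on our side is only the bookkeeping indicated above.
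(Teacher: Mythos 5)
Your proposal is correct and follows the same route as the paper, which likewise obtains the lemma directly from Kemppainen's uniform continuity result \cite[Lemma 4.4]{Kem}; the reduction of the two-sided bound to the one-sided fattening estimate via monotonicity and fill-invariance of $\hcap$ is exactly the bookkeeping the paper leaves implicit. No gaps.
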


Here, for a compact set $A \subseteq \barH$, $\fill(A)$ denotes the complement of the unbounded connected component of $\H \setminus A$, and $A^\delta$ the $\delta$-neighbourhood of $A$.

For a sequence of domains $H^n \subseteq \H$ that contain a common neighbourhood of $\infty$, their kernel (with respect to $\infty$) is the largest domain $H$ containing a neighbourhood of $\infty$ such that any compact $K \subseteq H$ is contained in all but finitely many $H^n$.

\begin{lemma}\label{thm:ball_components_limit}
Let $H^n \subseteq \H$ be a sequence of simply connected domains that contain a common neighbourhood of $\infty$, and let $H$ be their kernel (with respect to $\infty$). Let $z \in \H$, $0 < r_1 < r_2$, and $z_1,z_2 \in B(z,r_1) \cap H$. If for all $n$ the points $z_1$ and $z_2$ are in the same connected component of $B(z,r_1) \cap H^n$, then they are in the same connected component of $B(z,r_2) \cap H$.
\end{lemma}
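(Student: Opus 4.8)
The plan is to argue by contradiction, exploiting the defining property of the kernel to transport a separating arc from the limit domain $H$ back to the approximating domains $H^n$. Suppose $z_1$ and $z_2$ lie in different connected components of $B(z,r_2)\cap H$. Since $H$ is open and simply connected and $z_1, z_2 \in B(z,r_1) \cap H$ with $r_1 < r_2$, the two points being separated inside $B(z,r_2)\cap H$ means there is a crosscut of $B(z,r_2)$ lying in the complement of $H$ (more precisely in $\partial H$ together with the exterior) that, relative to $B(z,r_2)$, disconnects $z_1$ from $z_2$. The idea is that this separating set, being closed and contained in $\H \setminus H$, sits at a positive distance from the compact set $\overline{B(z,r_1)}$ restricted appropriately, and one can enclose the relevant piece of it by a compact set $L \subseteq \H \setminus H$ whose removal from $B(z,r_2)$ still separates $z_1$ and $z_2$. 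The delicate point is that $\H\setminus H$ need not be connected or nicely behaved, so instead of working with $\H \setminus H$ directly I would work with $H$ itself: take a slightly smaller radius $r_1 < r' < r_2$ and use that the closure of the component of $B(z,r')\cap H$ containing $z_1$ is a compact subset of $H \cup \partial B(z,r')$, hence its intersection with $\H$ near $\partial B(z,r')$ can be controlled.

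A cleaner route avoids crosscuts: pick $r_1 < r' < r'' < r_2$. The connected component $C_1$ of $B(z,r'')\cap H$ containing $z_1$ is open and does not contain $z_2$. Consider the compact set $A \defeq \overline{C_1} \cap \overline{B(z,r')}$. This is a compact subset of $H$ (any boundary point of $C_1$ inside $B(z,r'')$ lies in $\partial H$, but such points have modulus of $z$-distance $\ge r''>r'$, so $A$ avoids them; and $A$ avoids $\partial B(z,r'')$ likewise), hence $A \subseteq H$. By the kernel property, $A \subseteq H^n$ for all large $n$. Now within $B(z,r'')\cap H^n$, both $z_1$ and $z_2$ lie in the same component by hypothesis, so there is a path $\eta_n$ in $B(z,r_1)\cap H^n$ from $z_1$ to $z_2$; wait — the hypothesis only gives a path in $B(z,r_1)\cap H^n$, which is even better. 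The contradiction must come from comparing this $H^n$-path against the $H$-separation.

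So here is the step I expect to be the main obstacle and how I would handle it: I must produce, from the $H$-separation of $z_1,z_2$ in $B(z,r_2)$, a \emph{compact} obstacle $L\subseteq \H\setminus H$ such that $L$ separates $z_1$ from $z_2$ in $B(z,r_2)$, because then $\H \setminus L$ is open and its complement $L$ is compact, so $H \subseteq \H\setminus L$, and by the kernel property applied in reverse I would get $H^n \subseteq \H \setminus L$ eventually — but that is false since the kernel property controls compact subsets of $H$, not supersets. The correct mechanism is: the component $C_2$ of $B(z,r_2)\cap H$ containing $z_1$ has the property that $\overline{C_2}\cap B(z,r_2) \subseteq H$ fails in general, so instead I use that the boundary of $C_2$ \emph{within} $B(z,r_2)$ lies in $\partial H$; I then take a compact arc $\beta$ in this boundary together with two subarcs of $\partial B(z,r_1)$ forming a Jordan curve in $\overline{B(z,r_2)}$ that encloses $z_1$ but not $z_2$ (or vice versa), with $\beta \subseteq \partial H \subseteq \H \setminus H$. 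The kernel property then forces: for large $n$, the curve $\eta_n\subseteq B(z,r_1)\cap H^n$ joining $z_1$ to $z_2$ must cross $\beta$ (by the Jordan curve theorem), but $\beta$, being close to $\partial H$, is at positive distance from… no. The resolution is to use the kernel definition stated: it says compact $K\subseteq H$ lies in all but finitely many $H^n$; dually, if $w\notin H$ then $w$ has a neighbourhood meeting only finitely many $H^n$ \emph{only if} $H$ is also the kernel "from outside", which for simply connected domains containing $\infty$ does hold by Carathéodory. I would therefore invoke Carathéodory kernel convergence: a subsequence of $(H^n)$ converges to $H$ in the Carathéodory sense, so $\partial H^n \to \partial H$ locally, which gives that the separating arc $\beta\subseteq\partial H$ is approximated by arcs $\beta_n\subseteq \partial H^n$; then $\eta_n$ cannot join $z_1$ to $z_2$ inside $B(z,r_1)\cap H^n$ without crossing $\beta_n$, contradicting $\eta_n \subseteq H^n$. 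Filling in the topology of the Jordan-curve separation and the Carathéodory approximation of the separating arc is the routine-but-careful part; the conceptual content is that kernel convergence of simply connected domains controls not just interior compacta but also the separation structure, which is exactly what is needed.
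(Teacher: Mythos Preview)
Your proposal has a genuine gap. The mechanism you finally settle on---invoking Carathéodory kernel convergence to approximate a separating arc $\beta \subseteq \partial H$ by arcs $\beta_n \subseteq \partial H^n$---is not available under the stated hypotheses. The lemma only assumes that $H$ is the kernel of $(H^n)$, which tells you that compact subsets of $H$ lie in all but finitely many $H^n$; it says nothing about points of $\partial H$ or of $\H\setminus H$ being close to $\partial H^n$. If the $H^n$ oscillate, $\partial H$ can contain points that sit deep inside infinitely many $H^n$, so no approximating $\beta_n\subseteq\partial H^n$ need exist. Passing to a Carathéodory-convergent subsequence does not rescue the argument: the subsequential kernel $H'$ may strictly contain $H$, so $\beta\subseteq\partial H$ need not lie in $\partial H'$, and in that case $z_1,z_2$ may well lie in the same component of $B(z,r_2)\cap H'$, giving no contradiction. (Your earlier ``cleaner route'' also breaks: the claim that boundary points of $C_1$ inside $B(z,r'')$ have distance $\ge r''$ from $z$ is false---they can lie anywhere in $B(z,r'')$, so $A=\overline{C_1}\cap\overline{B(z,r')}$ need not be contained in $H$.)

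The paper's proof runs in the opposite direction and is constructive. Take a polygonal path $\alpha_1\subseteq H$ from $z_1$ to $z_2$ (it may leave $B(z,r_1)$), and for each $n$ use the hypothesis to get a path $\alpha_3\subseteq B(z,r_1)\cap H^n$ with the same endpoints. The key idea you are missing is a winding-number argument exploiting the simple connectivity of $H^n$: the loop $\alpha_1\cup\alpha_3$ lies in $H^n$, so every point it encloses must lie in $H^n$. This forces certain small open sets $U_A$ in the annulus $B(z,r_2)\setminus\overline{B(z,r_1)}$ to lie in \emph{every} $H^n$, hence in the kernel $H$. Stringing these $U_A$ together with a neighbourhood of $\alpha_1$ produces a connected subset of $B(z,r_2)\cap H$ containing both $z_1$ and $z_2$. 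Thus the ``for all $n$'' hypothesis is converted into kernel membership via simple connectivity, with no reference to boundary approximation.
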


\begin{proof}
Since $H$ is a domain, we can find a simple polygonal path $\alpha_1$ in $H$ from $z_1$ to $z_2$. Note that such a path hits $\partial B(z,r_1)$ only a finite number of times. Moreover, by a small perturbation we can choose $\alpha_1$ to cross $\partial B(z,r_1)$ at each such time. If $\alpha_1$ does not cross $\partial B(z,r_1)$ at all, we are done, so assume from now on that it does. Let $U \subseteq H$ be an open neighbourhood of $\alpha_1$. The definition of kernel implies $U \subseteq H^n$ for all but finitely many $n$. Without loss of generality we restrict ourselves to that subsequence.

Suppose for the moment that (small neighbourhoods of) $z_1$ and $z_2$ lie in the same connected component of $B(z,r_1) \setminus \alpha_1$. This means that $z_1$ and $z_2$ can be connected by a simple path $\alpha_2$ in $B(z,r_1)$ that does not intersect $\alpha_1$ except at its endpoints. In that case, $\alpha_1 \cup \alpha_2$ is a simple loop and by the Jordan curve theorem separates $\hat{\mathbb{C}}$ into two components. Call the component that contains $\infty$ the ``outside'' component.

By construction, $\alpha_1 \cup \alpha_2$ separates $\partial B(z,r_1)$ into finitely many segments, alternatingly ``inside'' and ``outside''. Let $A$ be an ``inside'' segment. Then there exists an open connected set $U_A \subseteq B(z,r_2) \setminus \overline{B(z,r_1)}$ in the neighbourhood of $A$ that is still ``inside'' $\alpha_1 \cup \alpha_2$. We claim that $U_A \subseteq H^n$ for all $n$. This will imply that $\hat U \defeq U \cup \bigcup\limits_{A\text{ ``inside''}} U_A \subseteq H^n$ for all $n$, and hence $\hat U \subseteq H$. By alternatingly following segments of $\alpha_1$ and $U_A$, we see that $\hat U$ connects $z_1$ and $z_2$ in $B(z,r_2) \cap H$.

Let $n \in \mathbb{N}$. By assumption, we can find a path $\alpha_3$ in $B(z,r_1) \cap H^n$ that connects $z_1$ to $z_2$. Since $\alpha_2 \cup \alpha_3 \subseteq B(z,r_1)$, the winding numbers of $\alpha_1 \cup \alpha_2$ and $\alpha_1 \cup \alpha_3$ around $U_A$ are the same. Therefore $U_A$ is disconnected from $\infty$ (and hence also from $\mathbb{R}$) by $\alpha_1 \cup \alpha_3$. Since $\alpha_1 \cup \alpha_3 \subseteq H^n$ and $H^n$ is simply connected, we must have $U_A \subseteq H^n$.

It remains to handle the case that (small neighbourhoods of) $z_1$ and $z_2$ lie in different components of $B(z,r_1) \setminus \alpha_1$. By construction, $\alpha_1 \cap B(z,r_1)$ consists of finitely many segments. Pick the segment $\tilde\alpha$ that bounds the component in which (a small neighbourhood of) $z_1$ lies, and let $\tilde z_2 \in \tilde\alpha$. Now $z_1$ and $\tilde z_2$ fulfil the conditions of the lemma again because any path in $B(z,r_1)$ from $z_1$ to $z_2$ needs to cross $\tilde\alpha$, and for each $n$ one such path lies in $H^n$ (by the assumption on $z_1,z_2$). Moreover, (small neighbourhoods of) $z_1$ and $\tilde z_2$ lie in the same component of $B(z,r_1) \setminus \alpha_1$. By the previous part of the proof, $z_1$ and $\tilde z_2$ are in the same connected component of $B(z,r_2) \cap H$. Repeating this argument, the lemma in the general case follows by induction.
\end{proof}

\begin{proposition}\label{thm:limits_of_simple_curves}
    Let $\gamma^n\colon [0,1] \to \barH$ be simple paths in $\H$ starting at $\gamma^n(0)=0$ and parametrised by half-plane capacity. Suppose $\|\gamma-\gamma^n\|_\infty \to 0$ and let $K_t = \fill(\gamma[0,t])$. Then the family $(K_t)_{t \in [0,1]}$ is parametrised by half-plane capacity and satisfies the local growth property.
\end{proposition}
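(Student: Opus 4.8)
The plan is to show two things: first, that the hulls $K_t = \fill(\gamma[0,t])$ are compact $\H$-hulls that are genuinely growing (i.e. $K_s \subseteq K_t$ for $s \le t$), are continuous in $t$ in a suitable sense, and satisfy the local growth property; and second, that the half-plane capacity of $K_t$ is exactly $2t$, so that the parametrisation claim holds. The natural tool is a Carathéodory-type kernel convergence argument combined with the uniform continuity of half-plane capacity from \cref{thm:hcap_continuity}. I would first record that since each $\gamma^n$ is parametrised by half-plane capacity, $\hcap(\fill(\gamma^n[0,t])) = 2t$ for every $n$ and $t$, and that all the $\gamma^n$ (hence $\gamma$) have uniformly bounded range, say within a ball of radius $R$.

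The key step is to pass the relation $\hcap(\fill(\gamma^n[0,t])) = 2t$ to the limit. For this I would fix $t$ and $\varepsilon > 0$, take the $\delta$ from \cref{thm:hcap_continuity} for this $R$ and $\varepsilon$, and use $\|\gamma - \gamma^n\|_\infty \to 0$ to conclude that for large $n$ one has $\gamma^n[0,t] \subseteq \gamma[0,t]^\delta$ and $\gamma[0,t] \subseteq \gamma^n[0,t]^\delta$, hence $\fill(\gamma^n[0,t]) \subseteq \fill(\overline{\fill(\gamma[0,t])^\delta})$ and vice versa (using that taking $\fill$ only enlarges the set and is monotone, and that $\fill(\gamma[0,t]) = K_t$). \cref{thm:hcap_continuity} then gives $|\hcap(K_t) - 2t| < \varepsilon$, and since $\varepsilon$ was arbitrary, $\hcap(K_t) = 2t$. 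Monotonicity $K_s \subseteq K_t$ for $s \le t$ is immediate since $\gamma[0,s] \subseteq \gamma[0,t]$ and $\fill$ is monotone; combined with $\hcap(K_t) = 2t$ this in particular forces each $K_t$ to be a legitimate (bounded, relatively closed) compact $\H$-hull.

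For the local growth property, recall this says that for every $\varepsilon > 0$ there is $\eta > 0$ so that for all $s \le t$ with $t - s \le \eta$, the set $g_{K_s}(K_t \setminus K_s)$ has diameter at most $\varepsilon$, where $g_{K_s}$ is the conformal map from $\H \setminus K_s$ onto $\H$; equivalently, the image complement grows continuously. I would deduce this from the continuity of $t \mapsto K_t$ in the Carathéodory sense together with $\hcap(K_t) - \hcap(K_s) = 2(t-s) \to 0$. Concretely, $\H \setminus K_t$ is the kernel of $\H \setminus K_{t_n}$ along any $t_n \downarrow t$ (and similarly from below), because the $K_t$ are nested and equal the filled traces of a continuous curve; so $g_{K_t} \to g_{K_s}$ locally uniformly as $t \downarrow s$. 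Since the driving function of a Loewner chain is recovered as the point where the capacity-zero hulls collapse, and since $\hcap$ increments go to zero, standard estimates (e.g. \cite[Lemma 4.13]{Law} type bounds relating $\diam(g_{K_s}(K_t \setminus K_s))$ to $\hcap(K_t) - \hcap(K_s)$ and the Carathéodory distance) give the uniform modulus $\eta(\varepsilon)$. Here one uses that the curve $\gamma$ is itself continuous, so $K_t \setminus K_s$ has small diameter when $t-s$ is small, and hence so does its image.

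The main obstacle I anticipate is not the capacity identity — that follows cleanly from \cref{thm:hcap_continuity} — but rather verifying the local growth property with a modulus uniform in $s$, since a priori the limiting curve $\gamma$ could, for instance, trace back over itself or touch $\R$, and $\fill$ could create hulls whose boundary behaviour is delicate. The clean way around this is to observe that $K_t \setminus K_{t-\eta}$ has diameter bounded by the oscillation of $\gamma$ on $[t-\eta, t]$ (after filling, which can only be controlled if we also note $\fill$ does not increase diameter by more than a harmless factor within the ambient ball of radius $R$), uniformly in $t$ by uniform continuity of $\gamma$ on the compact interval $[0,1]$; and a hull of small diameter that is ``attached'' near a boundary point has small image under $g_{K_{t-\eta}}$ by the standard distortion estimate, with a bound depending only on $R$ and the diameter. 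This last deduction — controlling the harmonic measure / conformal image of a small-diameter piece uniformly — is the step requiring the most care, and \cref{thm:ball_components_limit} is designed precisely to certify that the relevant pieces of $B(z,r) \cap (\H \setminus K_t)$ behave like the limiting ones, so that no unexpected component merging occurs in the limit.
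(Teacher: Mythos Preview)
Your argument for the half-plane capacity identity $\hcap(K_t)=2t$ is correct and matches the paper's.

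The gap is in your treatment of the local growth property. You claim that ``$K_t \setminus K_{t-\eta}$ has diameter bounded by the oscillation of $\gamma$ on $[t-\eta,t]$,'' and that $\fill$ only enlarges diameter by a harmless factor. This is false. The limit curve $\gamma$ can touch itself (or $\mathbb{R}$): think of an SLE$_6$ sample, which is in $\mathcal{S}$ by the support theorem. At the instant $\gamma$ completes a loop, a region of macroscopic diameter is swallowed, so $K_{t+\eta}\setminus K_t$ can be large even though $\gamma[t,t+\eta]$ is tiny. Your subsequent appeal to ``standard estimates relating $\diam(g_{K_s}(K_t\setminus K_s))$ to $\hcap(K_t)-\hcap(K_s)$'' also fails: the inequality $\hcap(A)\lesssim \operatorname{rad}(A)^2$ goes the wrong way for what you need, and a flat hull along $\mathbb{R}$ shows there is no reverse bound without further input.

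The paper does not try to control $\diam(K_{t+\delta}\setminus K_t)$ at all. Instead it works directly with the crosscut formulation of local growth: it shows that any two points $z_1,z_2 \in \gamma[t,t+\delta]\setminus K_t$ lie in the same connected component of $B(\gamma(t),2\varepsilon)\cap(\H\setminus K_t)$, so a single arc of $\partial B(\gamma(t),2\varepsilon)$ serves as the separating crosscut. This connectivity is obvious for each simple $\gamma^n$ (the curve segment $\gamma^n(]t,t+\delta])$ itself connects them inside $B(\gamma(t),\varepsilon+r)\cap H^n_t$), and \cref{thm:ball_components_limit} is precisely the tool that transfers this connectivity statement to the kernel limit $H_t$. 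You mention that lemma at the end, but only as a vague safeguard; in fact it is the heart of the argument, and the ``small diameter'' route you propose cannot be repaired without it.
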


\begin{proof}
\cref{thm:hcap_continuity} implies $\hcap(\gamma^n[0,t]) \to \hcap K_t$ for all $t$, so the parametrisation by half-plane is preserved.

To show that $(K_t)$ satisfies the local growth property, we will find for any $\varepsilon > 0$ some $\delta > 0$ such that for all $t$ there exists a crosscut of length less than $\varepsilon$ in $\H \setminus K_t$ that separates $K_{t+\delta} \setminus K_t$ from $\infty$. In the following, we call $H^n_t \defeq \H \setminus \gamma^n[0,t]$ and $H_t \defeq \H \setminus K_t$.

Since $\gamma$ is uniformly continuous, we find $\delta > 0$ such that $|\gamma(t)-\gamma(s)| < \varepsilon$ for all $|t-s| < \delta$.

Now let $t \in [0,1]$ and $z_1,z_2 \in K_{t+\delta} \setminus K_t$. It suffices to consider $z_1,z_2 \in \gamma[t,t+\delta] \setminus K_t$ since this set bounds $K_{t+\delta} \setminus K_t$. We claim that $z_1$ and $z_2$ are in the same connected component of $H_t \setminus \partial B(\gamma(t), 2\varepsilon)$. This will imply that there exists a segment of $\partial B(\gamma(t), 2\varepsilon) \cap H_t$ that separates $z_1$ and $z_2$ from $\infty$ in $H_t$ (This can be seen e.g. by mapping $H_t$ to $\H$). That segment is the desired crosscut.

By the choice of $\delta$ we have $z_1,z_2 \in H_t \cap B(\gamma(t),\varepsilon)$, and we can find $r>0$ such that $B(z_i,2r) \subseteq H_t \cap B(\gamma(t),\varepsilon)$, $i=1,2$. Let $n$ be large enough so that $\|\gamma-\gamma^n\|_\infty < r$. In particular, $B(z_i,r) \subseteq H^n_t \cap B(\gamma(t),\varepsilon)$, $i=1,2$.

Note that the uniform convergence of $\gamma^n$ implies $\gamma^n[0,t] \to K_t$ in the sense of kernel convergence. Since $\gamma^n$ are simple, $B(z_1,r)$ and $B(z_2,r)$ are connected by $\gamma^n(]t,t+\delta])$ in $H^n_t$. Moreover, $\gamma^n(]t,t+\delta]) \subseteq B(\gamma(t),\varepsilon+r)$ by the choice of $\delta$ and $n$. So by \cref{thm:ball_components_limit}, $B(z_1,r)$ and $B(z_2,r)$ are in the same connected component of $B(\gamma(t),2\varepsilon) \cap H_t$.
\end{proof}

We turn back to the question of characterising $\mathcal{S}$. Just from the definition of the support, for fixed $\kappa'\neq 8$,  we have a.s. $\gamma^{\kappa'}\in \mathcal{S}$.  Moreover, since piece-wise linear functions are in $\mathcal{D}$, any  $\gamma^\lambda$ that is approximated by a sequence of Loewner curves generated by piece-wise linear drivers is in $\mathcal{S}$. In particular, \cite[Theorem 2.2]{Tran} shows that if $\lambda$ is weakly $1/2$-H\"older and $|(\hat{f}^\lambda_t)'(iy)| \le Cy^{-\beta}$ for some $\beta < 1$ and all $t,y \in {]0,1]}$, then $\gamma^\lambda$ has such an approximation, hence is in $\mathcal{S}$.

We can also represent $\mathcal{S}$ by different sets of curves. For instance,
\begin{equation*}
    \mathcal{S} = \overline{\{\gamma^\lambda: \lambda \text{ piece-wise linear and } \lambda(0)=0\}}.
\end{equation*}
From the results in \cite{Tran} it follows that
\begin{equation*}
    \mathcal{S} = \overline{\{\gamma^\lambda: \lambda \text{ piece-wise square-root and } \lambda(0)=0\}}.
\end{equation*}
Or
\begin{align*}
    \mathcal{S} &= \overline{\{\gamma^\lambda \mid \lambda \in C^\infty \text{ and } \lambda(0)=0\}}\\
    &= \overline{\{\gamma \in C^\infty((0,1];\mathbb{H}) \mid \gamma \text{ simple, param. by half-plane capacity, and } \gamma(0)=0\}}.
\end{align*}
To see the last equality, suppose we have a simple curve $\gamma \in C^\infty((0,1];\mathbb{H})$ with $\gamma(0)=0$. Then we can approximate it by a simple smooth curve $\tilde\gamma \in C^\infty((0,1];\mathbb{H})$ with $\tilde\gamma(t) = i2\sqrt{t}$ on a very small time interval $t \in [0,\delta]$. Then $\tilde\gamma$ is driven by a smooth driving function (see \cite{EE}), so $\tilde\gamma\in\mathcal{S}$. (Strictly speaking, we also need to parametrise $\tilde\gamma$ by half-plane capacity, but this will not change the approximation much, as the proof of \cref{p:simple_curves} below shows.)

We can say more.
\begin{proposition}\label{p:simple_curves}
    Let $\gamma = C([0,1];\mathbb{H})$, with $\gamma(0)=0$, be a simple curve. Then $\gamma \in \mathcal{S}$ if and only if it is parametrised by half-plane capacity.
\end{proposition}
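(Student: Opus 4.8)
\emph{Overview.} The plan is to prove the two implications separately, using \cref{thm:limits_of_simple_curves} for the forward direction and a smooth-approximation argument controlled by \cref{thm:hcap_continuity} for the backward one.

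\emph{From $\gamma\in\mathcal{S}$ to parametrisation by half-plane capacity.} I would write $\gamma=\lim_n\gamma^{\lambda_n}$ uniformly with $\lambda_n\in\mathcal{D}$. Each $\gamma^{\lambda_n}$ is a simple curve with $\gamma^{\lambda_n}(0)=0$ parametrised by half-plane capacity, so \cref{thm:limits_of_simple_curves} applied with $\gamma^n=\gamma^{\lambda_n}$ gives $\hcap\fill(\gamma[0,t])=2t$ for all $t$. Since $\gamma$ is simple, $\fill(\gamma[0,t])=\gamma[0,t]$, whence $\hcap\gamma[0,t]=2t$ for all $t$; that is, $\gamma$ is parametrised by half-plane capacity.

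\emph{From parametrisation by half-plane capacity to $\gamma\in\mathcal{S}$.} Recall that $\mathcal{S}$ is closed and, by the discussion preceding the proposition, contains the half-plane-capacity parametrisation of every smooth simple arc in $\overline{\mathbb{H}}$ that equals a vertical segment near $0$ (such an arc has driver $\equiv 0$ on the capacity-time interval of the vertical segment and smooth afterwards, hence of locally vanishing $1/2$-Hölder norm, so its driver lies in $\mathcal{D}$). So, given $\varepsilon>0$, it suffices to produce such an arc $\tilde\eta$ whose capacity parametrisation $\eta$ satisfies $\|\eta-\gamma\|_{\infty,[0,1]}<\varepsilon$. First fix a tolerance $\mathrm{tol}>0$ (to be chosen below), let $R<\infty$ be a radius depending only on $\gamma$ and $\varepsilon$ that bounds the radii of $\gamma[0,s]$ and of every hull within $\varepsilon/4$ of $\gamma$, and let $0<\varepsilon'\le\varepsilon/4$ be small enough that the conclusion of \cref{thm:hcap_continuity} holds for compact $\mathbb{H}$-hulls of radius $<R$ each lying in the $\varepsilon'$-neighbourhood of the other, with error $<\mathrm{tol}$. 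By a standard (if slightly tedious) smoothing one constructs a smooth embedded arc $\tilde\eta\subseteq\overline{\mathbb{H}}$, equal to a short vertical segment $[0,iy_0]$ near $0$ and contained in $\mathbb{H}$ at positive distance from $\mathbb{R}$ afterwards, together with an increasing parametrisation $\tilde\eta\colon[0,1]\to\overline{\mathbb{H}}$ such that $|\tilde\eta(s)-\gamma(s)|<\varepsilon'$ for all $s\in[0,1]$; after rescaling $\tilde\eta$ spatially by a factor $1+O(\mathrm{tol})$ we may also assume $\hcap\tilde\eta=2$, so that $\eta$ is defined on $[0,1]$ and, by the discussion above, $\eta=\gamma^\lambda$ for some $\lambda\in\mathcal{D}$.

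\emph{Comparing the two parametrisations.} For each $s$ the initial segments $\tilde\eta[0,s]$ and $\gamma[0,s]$ are simple arcs, hence compact $\mathbb{H}$-hulls of radius $<R$, and each lies in the $\varepsilon'$-neighbourhood of the other; so \cref{thm:hcap_continuity} gives $|\hcap\tilde\eta[0,s]-2s|<\mathrm{tol}$ uniformly in $s$. Consequently the increasing bijection $\psi\colon[0,1]\to[0,1]$ determined by $\eta(\psi(s))=\tilde\eta(s)$ satisfies $|\psi(s)-s|<\mathrm{tol}$ for all $s$, and therefore, writing $\omega_\gamma$ for the modulus of continuity of $\gamma$,
\begin{equation*}
\|\eta-\gamma\|_{\infty,[0,1]}=\sup_t|\tilde\eta(\psi^{-1}(t))-\gamma(t)|\le\sup_t\bigl(|\tilde\eta(\psi^{-1}(t))-\gamma(\psi^{-1}(t))|+\omega_\gamma(|\psi^{-1}(t)-t|)\bigr)<\varepsilon'+\omega_\gamma(\mathrm{tol}).
\end{equation*}
Choosing the constants in the order $\mathrm{tol}$ (so small that $\omega_\gamma(\mathrm{tol})<\varepsilon/2$), then $R$ and $\varepsilon'$, then $\tilde\eta$, yields $\|\eta-\gamma\|_\infty<\varepsilon$ and finishes the proof. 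The step I expect to be the main obstacle is precisely this last one: one must ensure that replacing an arbitrary admissible parametrisation of the smooth approximant by its half-plane-capacity parametrisation does not destroy the uniform closeness to $\gamma$, and this is exactly what the quantitative, uniform-in-$s$ continuity of half-plane capacity in \cref{thm:hcap_continuity} provides, as long as the constants are chosen in the right order. The construction of the approximant $\tilde\eta$ itself (smooth, simple, a vertical segment near $0$, and uniformly close to $\gamma$ under some reparametrisation) is routine but needs a little care, in the spirit of the remarks preceding this proposition.
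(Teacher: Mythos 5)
Your proof is correct and follows essentially the same route as the paper's: the forward direction rests on the uniform continuity of half-plane capacity (you cite it via \cref{thm:limits_of_simple_curves}, the paper applies \cref{thm:hcap_continuity} directly to a single nearby Loewner curve), and the backward direction is the paper's argument — smooth simple approximant with a vertical segment at $0$, reparametrisation by capacity, and control of the time-change via \cref{thm:hcap_continuity} together with the modulus of continuity of $\gamma$. Your explicit normalisation $\hcap\tilde\eta=2$ and the stated order of choosing constants are minor refinements of details the paper leaves implicit.
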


\begin{proof}
First let us show that any $\gamma \in \mathcal{S}$ is necessarily parametrised by half-plane capacity. Let $\gamma \in \mathcal{S}$ and find a Loewner curve $\gamma^\lambda$, e.g. a sample of SLE, (which by definition is parametrised by half-plane capacity) such that $\|\gamma-\gamma^\lambda\|_{\infty,[0,1]} < \varepsilon$. From \cref{thm:hcap_continuity} it follows that
\begin{equation*}
|\hcap(\gamma[0,t])-\hcap(\gamma^\lambda[0,t])| = |\hcap(\gamma[0,t])-2t| \leq \phi(\varepsilon) \quad \text{for all } t\in [0,1]
\end{equation*}
where $\phi:(0,\infty)\to (0,\infty)$ is a function depending on $\sup\{|\gamma(t)|:t\in [0,1]\}$ and satisfying $\phi(0+)=0$. Since this is true for all $\varepsilon > 0$, we have
\begin{equation*}
\hcap(\gamma[0,t]) = 2t \quad \text{for all } t\in [0,1]
\end{equation*}
as claimed.

    For the converse, let $\varepsilon > 0$. From \cite[Lemma 4.4]{Wer} it follows that there exists a linear interpolation $\gamma^{\mathcal P}$ that is simple and $\|\gamma-\gamma^{\mathcal P}\|_{\infty,[0,1]} < \varepsilon$. Then we can find a simple smooth curve $\eta \in C^\infty((0,1];\mathbb{H})$, with $\eta(0)=0$, such that $\|\eta-\gamma^{\mathcal P}\|_{\infty,[0,1]} < \varepsilon$, and consequently $\|\eta-\gamma\|_{\infty,[0,1]} < 2\varepsilon$. Let $\bar\eta$ be the reparametrisation of $\eta$ by half-plane capacity. Since $\bar\eta \in \mathcal{S}$, we only need to show that $\|\bar\eta-\gamma\|_{\infty,[0,1]}$ is small.

    Again, from \cref{thm:hcap_continuity}, since $\|\eta-\gamma\|_{\infty,[0,1]} < 2\varepsilon$, it follows that 
    \begin{equation*}
        |\hcap(\eta[0,t])-\hcap(\gamma[0,t])| \leq  \phi(\varepsilon) \quad \text{for all } t\in [0,1]
    \end{equation*}
    where $\phi:(0,\infty)\to (0,\infty)$ is a function depending on $\sup\{|\gamma(t)|:t\in [0,1]\}$ and satisfying $\phi(0+)=0$.
    
    Note that $\hcap(\gamma[0,t])=2t = \hcap(\bar\eta[0,t]])$ for all $t$. Hence, $\bar\eta(t) = \eta(s)$ where $|t-s| \le \phi(\varepsilon)/2$. That implies
    \begin{equation*}
        \|\bar\eta-\gamma\|_{\infty,[0,1]}\leq \tilde\phi(\varepsilon)
    \end{equation*}
    where $\tilde\phi:(0,\infty)\to (0,\infty)$ depending on $\phi$ and the uniform continuity of $\gamma$, and satisfying $\tilde\phi(0+)=0$.

\end{proof}

There are further questions that we have not answered. 
\begin{itemize}
    \item Can one strengthen the topology in Theorem \ref{t:main}? Note that the statement of \cref{t:main} is the same regardless of $\kappa$. But as shown in \cite{JVL}, for each $\kappa$ there exists an optimal $\alpha_*(\kappa)$ such that $\gamma^\kappa$ is $\alpha$-Hölder continuous for $\alpha < \alpha_*(\kappa)$. Ideally, we would like to characterise the support of \SLEk{} in the $\alpha$-Hölder space, or similarly, in the $p$-variation space where $p > p_*(\kappa)$ (see \cite{FT}). 
    
    (Note that it is proved in \cite{FS} that  $\gamma^\lambda$ is $1/2$-H\"older continuous on $[0,1]$ for $\lambda\in W^{1,2}$. Hence, almost surely the $\alpha$-H\"older norm of $(\gamma^\kappa-\gamma^\lambda)$ is finite for some $\alpha>0$.)
    
    \item We do not know how $ \P(\|\gamma^\kappa-\gamma^\lambda\|_{\infty,[0,1]}<\varepsilon)$ behaves as $\varepsilon\to 0^+$. However as $\kappa\to 0$, we believe that similarly to \cite{Wang} the following is true:
\begin{equation*}
    \lim_{\kappa\to 0}-\kappa\ln \P(\|\gamma^\kappa-\gamma^\lambda\|_{\infty,[0,1]}<\varepsilon) = \inf_{\{U\in W^{1,2}: \|\gamma^U-\gamma^\lambda\|_{\infty;[0,1]}<\varepsilon\}} \frac{1}{2} \int^1_0 |U'(t)|^2\,dt.
\end{equation*}
    \item To what extent does the converse of \cref{thm:limits_of_simple_curves} hold? Is every (not necessarily simple) curve, parametrised by half-plane capacity, that satisfies the local growth property  in fact in $\mathcal{S}$? (If not, is there a characterisation which curves are in $\mathcal{S}$?)\\
    This would give us a full characterisation of $\mathcal{S}$, generalising \cref{p:simple_curves} to general curves in $C([0,1];\barH)$.
    
    \emph{Update:} In an ongoing work, the second author gives a positive answer to this question.
\end{itemize}

\end{section}


\begin{thebibliography}{CDCH{\etalchar{+}}14}

\bibitem[BAGL94]{AGL}
G\'erard Ben~Arous, Mihai Gr{\u a}dinaru, and Michel Ledoux.
\newblock H\"older norms and the support theorem for diffusions.
\newblock {\em Ann. Inst. H. Poincar\'e Probab. Statist.}, 30(3):415--436,
  1994.

\bibitem[BJVK13]{BJVK}
Christian Bene{\v s}, Fredrik Johansson~Viklund, and Michael~J. Kozdron.
\newblock On the rate of convergence of loop-erased random walk to {$SLE_2$}.
\newblock {\em Comm. Math. Phys.}, 318(2):307--354, 2013.

\bibitem[CDCH{\etalchar{+}}14]{Che+}
Dmitry Chelkak, Hugo Duminil-Copin, Cl\'{e}ment Hongler, Antti Kemppainen, and
  Stanislav Smirnov.
\newblock Convergence of {I}sing interfaces to {S}chramm's {SLE} curves.
\newblock {\em C. R. Math. Acad. Sci. Paris}, 352(2):157--161, 2014.

\bibitem[CF10]{CF11}
Thomas Cass and Peter Friz.
\newblock Densities for rough differential equations under {H}\"{o}rmander's
  condition.
\newblock {\em Ann. of Math. (2)}, 171(3):2115--2141, 2010.

\bibitem[CF18]{CF}
K.~Chouk and P.~K. Friz.
\newblock Support theorem for a singular {SPDE}: the case of g{PAM}.
\newblock {\em Ann. Inst. Henri Poincar\'e Probab. Stat.}, 54(1):202--219,
  2018.

\bibitem[EE01]{EE}
Clifford~J. Earle and Adam~Lawrence Epstein.
\newblock Quasiconformal variation of slit domains.
\newblock {\em Proc. Amer. Math. Soc.}, 129(11):3363--3372, 2001.

\bibitem[FS17]{FS}
Peter~K. Friz and Atul Shekhar.
\newblock On the existence of {SLE} trace: finite energy drivers and
  non-constant {$\kappa $}.
\newblock {\em Probab. Theory Related Fields}, 169(1-2):353--376, 2017.

\bibitem[FT17]{FT}
Peter~K. Friz and Huy Tran.
\newblock On the regularity of {SLE} trace.
\newblock {\em Forum Math. Sigma}, 5:e19, 17, 2017.

\bibitem[FV10]{FV}
Peter~K. Friz and Nicolas~B. Victoir.
\newblock {\em Multidimensional stochastic processes as rough paths}, volume
  120 of {\em Cambridge Studies in Advanced Mathematics}.
\newblock Cambridge University Press, Cambridge, 2010.
\newblock Theory and applications.

\bibitem[JVL11]{JVL}
Fredrik Johansson~Viklund and Gregory~F. Lawler.
\newblock Optimal {H}\"older exponent for the {SLE} path.
\newblock {\em Duke Math. J.}, 159(3):351--383, 2011.

\bibitem[JVRW14]{JVRW}
Fredrik Johansson~Viklund, Steffen Rohde, and Carto Wong.
\newblock On the continuity of {$\text{SLE}_\kappa$} in {$\kappa$}.
\newblock {\em Probab. Theory Related Fields}, 159(3-4):413--433, 2014.

\bibitem[Kem17]{Kem}
Antti Kemppainen.
\newblock {\em Schramm-{L}oewner evolution}, volume~24 of {\em SpringerBriefs
  in Mathematical Physics}.
\newblock Springer, Cham, 2017.

\bibitem[Law05]{Law}
Gregory~F. Lawler.
\newblock {\em Conformally invariant processes in the plane}, volume 114 of
  {\em Mathematical Surveys and Monographs}.
\newblock American Mathematical Society, Providence, RI, 2005.

\bibitem[Lin05]{Lind05}
Joan~R. Lind.
\newblock A sharp condition for the {L}oewner equation to generate slits.
\newblock {\em Ann. Acad. Sci. Fenn. Math.}, 30(1):143--158, 2005.

\bibitem[Lin08]{Lind08}
Joan~R. Lind.
\newblock H\"older regularity of the {SLE} trace.
\newblock {\em Trans. Amer. Math. Soc.}, 360(7):3557--3578, 2008.

\bibitem[LMR10]{LMR}
Joan Lind, Donald~E. Marshall, and Steffen Rohde.
\newblock Collisions and spirals of {L}oewner traces.
\newblock {\em Duke Math. J.}, 154(3):527--573, 2010.

\bibitem[LQZ02]{LQZ}
M.~Ledoux, Z.~Qian, and T.~Zhang.
\newblock Large deviations and support theorem for diffusion processes via
  rough paths.
\newblock {\em Stochastic Process. Appl.}, 102(2):265--283, 2002.

\bibitem[LSW04]{LSW04}
Gregory~F. Lawler, Oded Schramm, and Wendelin Werner.
\newblock Conformal invariance of planar loop-erased random walks and uniform
  spanning trees.
\newblock {\em Ann. Probab.}, 32(1B):939--995, 2004.

\bibitem[MR05]{MR}
Donald~E. Marshall and Steffen Rohde.
\newblock The {L}oewner differential equation and slit mappings.
\newblock {\em J. Amer. Math. Soc.}, 18(4):763--778, 2005.

\bibitem[Pom92]{Pom}
Ch. Pommerenke.
\newblock {\em Boundary behaviour of conformal maps}, volume 299 of {\em
  Grundlehren der Mathematischen Wissenschaften [Fundamental Principles of
  Mathematical Sciences]}.
\newblock Springer-Verlag, Berlin, 1992.

\bibitem[RS05]{RS}
Steffen Rohde and Oded Schramm.
\newblock Basic properties of {SLE}.
\newblock {\em Ann. of Math. (2)}, 161(2):883--924, 2005.

\bibitem[Smi01]{Smi01}
Stanislav Smirnov.
\newblock Critical percolation in the plane: conformal invariance, {C}ardy's
  formula, scaling limits.
\newblock {\em C. R. Acad. Sci. Paris S\'{e}r. I Math.}, 333(3):239--244, 2001.

\bibitem[SS05]{SS05}
Oded Schramm and Scott Sheffield.
\newblock Harmonic explorer and its convergence to {${\rm SLE}_4$}.
\newblock {\em Ann. Probab.}, 33(6):2127--2148, 2005.

\bibitem[SS09]{SS09}
Oded Schramm and Scott Sheffield.
\newblock Contour lines of the two-dimensional discrete {G}aussian free field.
\newblock {\em Acta Math.}, 202(1):21--137, 2009.

\bibitem[SV72]{SV}
Daniel~W. Stroock and S.~R.~S. Varadhan.
\newblock {\em On the support of diffusion processes with applications to the
  strong maximum principle}.
\newblock Univ. California Press, Berkeley, Calif., 1972.

\bibitem[Tra15]{Tran}
Huy Tran.
\newblock Convergence of an algorithm simulating {L}oewner curves.
\newblock {\em Ann. Acad. Sci. Fenn. Math.}, 40(2):601--616, 2015.

\bibitem[TW18]{TW}
Pavlos Tsatsoulis and Hendrik Weber.
\newblock Spectral gap for the stochastic quantization equation on the
  2-dimensional torus.
\newblock {\em Ann. Inst. Henri Poincar\'e Probab. Stat.}, 54(3):1204--1249,
  2018.

\bibitem[Wan19]{Wang}
Yilin Wang.
\newblock The energy of a deterministic {L}oewner chain: reversibility and
  interpretation via {${\rm SLE}_{0+}$}.
\newblock {\em J. Eur. Math. Soc. (JEMS)}, 21(7):1915--1941, 2019.

\bibitem[Wer12]{Wer}
Brent~M. Werness.
\newblock Regularity of {S}chramm-{L}oewner evolutions, annular crossings, and
  rough path theory.
\newblock {\em Electron. J. Probab.}, 17:no. 81, 21, 2012.

\bibitem[Won14]{Wong}
Carto Wong.
\newblock Smoothness of {L}oewner slits.
\newblock {\em Trans. Amer. Math. Soc.}, 366(3):1475--1496, 2014.

\bibitem[WZ65a]{WZ1}
Eugene Wong and Moshe Zakai.
\newblock On the convergence of ordinary integrals to stochastic integrals.
\newblock {\em Ann. Math. Statist.}, 36:1560--1564, 1965.

\bibitem[WZ65b]{WZ2}
Eugene Wong and Moshe Zakai.
\newblock On the relation between ordinary and stochastic differential
  equations.
\newblock {\em Internat. J. Engrg. Sci.}, 3:213--229, 1965.

\bibitem[Zha10]{Zhan}
Dapeng Zhan.
\newblock Duality of chordal {SLE}, {II}.
\newblock {\em Ann. Inst. Henri Poincar\'e Probab. Stat.}, 46(3):740--759,
  2010.

\end{thebibliography}
\bibliographystyle{alpha}
\newcommand{\etalchar}[1]{$^{#1}$}

\end{document}